\tikzset{nimplies/.style={
		decoration={markings,
			mark= at position 0.5 with {
				\node[transform shape] (tempnode){\tiny/};
			}},postaction={decorate}}}
\tikzset{negated/.style={
        decoration={markings,
            mark= at position 0.5 with {
                \node[transform shape] (tempnode) {$\backslash$};
            }
        },
        postaction={decorate}
    }
}
\newtheorem{theorem}{Theorem}[section]
\newtheorem*{theorem*}{Theorem}
\newtheorem{lemma}[theorem]{Lemma}
\newtheorem{ques}[theorem]{Question}
\newtheorem{proposition}[theorem]{Proposition}
\newtheorem{corollary}[theorem]{Corollary}
\theoremstyle{definition}
\newtheorem{definition}[theorem]{Definition}
\newtheorem{example}[theorem]{Example}
\theoremstyle{remark}
\newtheorem{remark}[theorem]{Remark}
\numberwithin{equation}{section}
\def\ignora#1{}
\def\nn#1{\left\vert  \! \left\vert \! \left\vert \, #1 \, \right\vert \!
	\right\vert \! \right\vert }
\DeclareMathOperator{\dens}{dens}
\renewcommand{\geq}{\geqslant}
\renewcommand{\leq}{\leqslant}
\newcommand{\norm}[1]{\left\Vert#1\right\Vert}
\newcommand{\spn}{\operatorname{span}}
\newcommand{\Xs}{X^{\ast}}
\newcommand{\Xss}{X^{\ast\ast}}
\newcommand{\supp}{\operatorname{supp}}
\newcommand{\e}{\varepsilon}
\newcommand{\cl}{\mathcal}
\newcommand{\n}{\norm}
\newcommand{\bb}{\mathbb}
\begin{document}
	
\title[A Characterization of Banach Spaces Containing $\ell_1(\kappa)$]{A Characterization of Banach Spaces Containing $\ell_1(\kappa)$ via ball-covering properties}
\author {Stefano Ciaci, Johann Langemets, Aleksei Lissitsin} 
\address{Institute of Mathematics and Statistics, University of Tartu, Narva mnt 18, 51009 Tartu, Estonia}
\email{stefano.ciaci@ut.ee, johann.langemets@ut.ee, aleksei.lissitsin@ut.ee}
\thanks{This work was supported by the Estonian Research Council grants (PSG487) and (PRG877).}
\urladdr{\url{https://johannlangemets.wordpress.com/}}
\subjclass[2020]{Primary 46B20; Secondary 46B03, 46B04, 46B26.}
\keywords{Octahedral norm, Ball-covering, $\ell_1$-subspace, Renorming}

\begin{abstract}
	In 1989, G.~Godefroy proved that a Banach space contains an isomorphic copy of $\ell_1$ if and only if it can be equivalently renormed to be octahedral. It is known that octahedral norms can be characterized by means of covering the unit sphere by a finite number of balls. This observation allows us to connect the theory of octahedral norms with ball-covering properties of Banach spaces introduced by L.~Cheng in 2006. Following this idea, we extend G.~Godefroy's result to higher cardinalities. We prove that, for an infinite cardinal $\kappa$, a Banach space $X$ contains an isomorphic copy of $\ell_1(\kappa^+)$ if and only if it can be equivalently renormed in such a way that its unit sphere cannot be covered by $\kappa$ many open balls not containing $\alpha B_X$, where $\alpha\in (0,1)$. We also investigate the relation between ball-coverings of the unit sphere and octahedral norms in the setting of higher cardinalities.
\end{abstract}

\maketitle

\section{Introduction}\label{sec: intro}

The existence of isomorphic copies of $\ell_1$ in a Banach space has been the interest of many mathematicians. Probably one of the most known results in this direction is the one given by H.~Rosenthal in \cite{Rosenthal}, which is independent of the norm considered in the space. There are also purely geometrical characterizations in terms of the considered norm in the space. For this reason octahedral norms were introduced in an unpublished paper by G.~Godefroy and B.~Maurey (see \cite{godefroy_metric_1989} and \cite{godefroy_maurey}).

We recall that the norm $\|\cdot\|$ on a normed space $X$ (or $X$) is called \emph{octahedral}
if, for every finite-dimensional subspace $E$ of $X$ and $\varepsilon>\nobreak0$,
there is $y\in S_X$ such that
\[
\|x+y\|\geq(1-\varepsilon)\bigl(\|x\|+\|y\|\bigr)\quad\text{for all $x\in E$.}
\]

If $X$ is a separable Banach space, then G.~Godefroy and N.~Kalton (see \cite[Lemma~9.1]{GodefroyBallTopology}) proved that the octahedrality of $X$ is equivalent to the existence of some $x^{\ast\ast}\in X^{\ast\ast}\setminus\{0\}$ such that 
\[
\|x+x^{\ast\ast}\|=\|x\|+\|x^{\ast\ast}\| \quad \text{for all $x\in X$}.
\]

Therefore, it is clear that $\ell_1$ (also $L_1[0,1]$ and $C([0,1])$), being a separable Banach space, is octahedral. If we allow the space to be renormed, then G. Godefroy proved the following general characterization:

\begin{theorem}[see {\cite[Theorem~II.4]{godefroy_metric_1989}}]\label{thm: godefroy contain ell1} Let $X$ be a Banach space. Then $X$ contains an isomorphic copy of $\ell_1$ if and only if there exists an equivalent norm $\nn\cdot$ in $X$ such that $(X,\nn\cdot)$ is octahedral.
\end{theorem}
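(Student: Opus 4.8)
The statement is an equivalence, and I would argue the two implications separately.

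\emph{(Octahedral renorming $\Rightarrow$ a copy of $\ell_1$.)} If $\nn\cdot$ is an equivalent octahedral norm on $X$, I would manufacture an $\ell_1$-sequence straight from the definition. Fix scalars $\eps_n\in(0,1)$ with $c:=\prod_{n\ge1}(1-\eps_n)>0$, take $x_1$ with $\nn{x_1}=1$, and inductively apply octahedrality to $E_n:=\spn\{x_1,\dots,x_n\}$ and $\eps_n$ to get $x_{n+1}$ with $\nn{x_{n+1}}=1$ and $\nn{x+x_{n+1}}\ge(1-\eps_n)(\nn x+1)$ for every $x\in E_n$. An induction on $N$ (split off the last term, rescale it to norm $1$, and use $\nn{\pm x_{N+1}}=1$) then yields
\[
\nn{\textstyle\sum_{i=1}^{N}a_ix_i}\ \ge\ \Bigl(\prod_{i=1}^{N-1}(1-\eps_i)\Bigr)\sum_{i=1}^{N}|a_i|\ \ge\ c\sum_{i=1}^{N}|a_i|,
\]
and the matching upper bound with constant $1$ is just the triangle inequality; so $e_n\mapsto x_n$ embeds $\ell_1$ isomorphically into $(X,\nn\cdot)$, hence into $X$. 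This direction is routine and I do not expect difficulties.

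\emph{($X\supseteq\ell_1$ $\Rightarrow$ octahedral renorming.)} This is where the real work is. The plan is to renorm $X$ so that its bidual contains an \emph{$L$-orthogonal} vector over $X$: some $\Phi\in\Xss\setminus\{0\}$ with $\nn{x+\Phi}=\nn x+\nn\Phi$ for all $x\in X$ (bidual norm). Such a $\Phi$ forces $\nn\cdot$ to be octahedral by a one-line use of the principle of local reflexivity — given a finite-dimensional $E\subseteq X$ and $\eps>0$, choose a $(1+\eps)$-isomorphism $T\colon\spn(E\cup\{\Phi\})\to X$ with $T|_E=\mathrm{id}$ and take $y:=T\Phi/\nn{T\Phi}$ as the octahedrality witness — and this implication needs no separability. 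To build $\Phi$, I would first apply James' non-distortion theorem to replace the given isomorphic copy of $\ell_1$ by an \emph{almost isometric} one: a normalised sequence $(e_n)\subseteq X$ with $\sum_i|\lambda_i|\le\norm{\sum_i\lambda_ie_i}\le(1+\delta)\sum_i|\lambda_i|$ for all finitely supported scalars and $\delta$ small; put $Y:=\cspn\{e_n\}\cong\ell_1$. The natural candidate is $\Phi:=w^{\ast}\text{-}\lim_{\mathcal U}e_n\in\Xss$ along a free ultrafilter $\mathcal U$; testing against the ``all-ones'' functional on $Y$ gives $\nn\Phi\approx1$, and since $\ell_1$ is itself octahedral one gets for free that $\Phi$ is $L$-orthogonal to $Y$ up to the factor $1+\delta$. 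The task is then to \emph{reshape the geometry of $X$ in the directions carrying $Y$} so that $\Phi$ becomes $L$-orthogonal to all of $X$.

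Constructing that renorming, and verifying the $L$-orthogonality, is the main obstacle. The obvious guess — the inf-convolution $\nn x:=\inf\{\norm{x-y}+\norm y_Y:y\in Y\}$ — fails: being an inf-convolution it can only \emph{decrease} the norm, whereas making $Y$ ``split off in an $\ell_1$-fashion'' requires \emph{enlarging} it (already on $X=\ell_2\oplus_2\ell_1$ that inf-convolution returns the original, non-octahedral, norm). So I would instead enlarge the dual ball $B_{X^{\ast}}$ to gain room precisely in the directions dual to $Y$, while keeping it inside a multiple of $B_{X^{\ast}}$ so that the new norm stays equivalent, and then prove $\nn{x+\Phi}=\nn x+\nn\Phi$ for \emph{every} $x\in X$ — the genuinely delicate case being vectors $x$ far from $Y$, since it is exactly the interaction of such vectors with the $\ell_1$-direction that breaks octahedrality in unrenormed examples like $\ell_2\oplus_2\ell_1$. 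For that verification it is convenient to reduce to separable $X$ first (one may work inside $\cspn(E\cup Y)$, on which the renorming restricts consistently) and to arrange, after passing to a subsequence, that the biorthogonal functionals of $(e_n)$ admit weak$^{\ast}$-null extensions to $X^{\ast}$ — legitimate in the separable case by metrizability of $B_{X^{\ast}}$ — so that $\Phi$ is truly a weak$^{\ast}$-limit and finitely supported vectors of $E$ interact negligibly with the basis tail. Pinning down the correct dual-ball enlargement and carrying out the $L$-orthogonality computation is, I expect, the crux of the whole proof.
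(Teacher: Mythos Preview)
The paper does not prove Theorem~\ref{thm: godefroy contain ell1}; it is quoted from Godefroy \cite{godefroy_metric_1989} as background. So there is no in-paper proof to compare your proposal against line by line. That said, the paper's own machinery for the higher-cardinal version (Proposition~\ref{prop: renorm X to fail -alpha-BCP} and Lemma~\ref{lem: equivalent norm}, specialised to $\kappa=\omega$ and read through Proposition~\ref{prop: finite BCP and OH}) does yield the hard direction of Godefroy's theorem, and its route is genuinely different from yours. Instead of looking for an $L$-orthogonal element in $X^{\ast\ast}$ and invoking local reflexivity, the paper renorms $X$ explicitly: first renorm so that $Y\cong\ell_1$ sits isometrically, then let $p$ be the infimum of all seminorms on $X$ dominated by $\|\cdot\|$ and agreeing with $\|\cdot\|$ on $Y$ (this is the Kadets--Shepelska--Werner construction \cite{Kadets2011,BosenkoKadets2010}), and set $\nn{x}:=p(x)+\|x+Y\|_{X/Y}$. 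The point is that $p$ automatically satisfies $\lim_n p(x+e_n)=p(x)+1$ for every $x\in X$, so for any finite $A\subset X$ and any $\alpha<1$ one finds a basis vector $e_\theta$ with $p(e_\theta-a)\ge p(a)-\alpha$ for all $a\in A$; Lemma~\ref{lem: equivalent norm} then converts this directly into failure of the $\alpha$-BCP$_{<\omega}$, i.e.\ octahedrality. No bidual, no ultrafilters, no separable reduction, and the renorming is written down in closed form. Your dual-ball-enlargement idea, by contrast, is left unspecified precisely at the decisive point; the Kadets-type seminorm is one concrete way to fill that gap.

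Your easy direction (octahedral $\Rightarrow$ $\ell_1$) is correct and essentially the same as the paper's Proposition~\ref{prop: X contains ell_1} specialised to finite sets, though you build the sequence directly rather than via Zorn.
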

 
It is known that a Banach space $X$ is octahedral whenever $\Xss$ is. The converse is not true in general. The natural norm of $C([0,1])$ is octahedral, but its bidual norm is not. However, if $X$ is a separable Banach space containing $\ell_1$, then there always exists an equivalent norm on $X$ such that the bidual $\Xss$ is octahedral (see \cite{langemets_bidual_2019}). The non-separable case remains unknown as far as the authors know.

G.~Godefroy also remarked (without a proof) in \cite[p.~12]{godefroy_metric_1989} that the norm on a Banach space $X$ is octahedral if and only if every finite covering of $B_X$, the closed unit ball of $X$, with closed balls has at least one member of the covering that itself contains $B_X$. A proof of this fact can be found in \cite{HallerGeometry2014}. 

Thus, octahedral norms are closely connected to coverings of the unit ball or the unit sphere in a Banach space, which is a frequent topic in the literature (see \cite{PapiniCovering2009} for a nice survey). In this direction L.~Cheng introduced the ball-covering property (see \cite{Cheng2006}). We recall that a normed space is said to have the \emph{ball-covering property} (BCP, for short) if its unit sphere can be covered by the union of countably many closed balls not containing the origin. It is known that separable Banach spaces have the BCP, but the converse is not true, because $\ell_\infty$ has the BCP. In recent years the BCP of Banach spaces has been studied by various authors (see e.g., \cite{ChengNote2010}, \cite{ChengRenorming2009}, \cite{fonf-zanco}, \cite{Guirao2019} and \cite{Luo2020}). 

Following \cite{ChengMinimal2009} and clarifying the state-of-the-art on the BCP, A.~J.~Guirao, A.~Lissitsin and V.~Montesinos (see \cite{Guirao2019}) extended the original BCP to the $\alpha$-BCP, where $\alpha\in [-1,1)$. A normed space $X$ is said to have the \emph{$\alpha$-BCP}, for $\alpha\in[0,1)$, if its unit sphere can be covered by countably many balls that do not intersect $\alpha B_X$, while, for $\alpha\in (-1,0)$, the balls are required not to contain $\alpha B_X$. In this language the original BCP corresponds to the $0$-BCP.

In this paper we consider a slight generalisation of the $\alpha$-BCP, namely the $\alpha$-BCP$_\kappa$ (see Definition~\ref{def: alpha-BCP_kappa}), in which we substitute the condition of the covering from being countable to being of cardinality $\kappa$, where $\kappa$ is some infinite cardinal. In addition, we consider the property $\alpha$-BCP$_{<\omega}$ (see Definition~\ref{def: alpha-BCP_finite}), in which we require the covering to be finite. Our starting point is the observation that a Banach space fails the $\alpha$-BCP$_{<\omega}$ for all $\alpha\in (-1,1)$ if and only if it is octahedral (see Proposition~\ref{prop: finite BCP and OH}). Therefore, G.~Godefroy's result (see Theorem~\ref{thm: godefroy contain ell1}) can be restated in terms of failing the $\alpha$-BCP$_{<\omega}$ (see Remark~\ref{rem: rephrase Godefroy's theorem}). Our main result in this paper is an extension of Theorem~\ref{thm: godefroy contain ell1} to infinite cardinals.

\begin{theorem*}
	Let $X$ be a Banach space and $\kappa$ an infinite cardinal. Then $X$ contains an isomorphic copy of $\ell_1(\kappa^+)$ if and only if it can be equivalently renormed to fail the $\alpha$-BCP$_{\kappa}$ for every $($or some$)$ $\alpha\in (-1,0)$.
\end{theorem*}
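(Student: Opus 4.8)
The plan is to recast the failure of the $\alpha$-BCP$_\kappa$ as an octahedrality-type condition relative to subspaces of density at most $\kappa$, and then to run the two implications separately, the renorming direction being the technical heart. Write $\beta:=-\alpha\in(0,1)$. The elementary observation is that a closed ball $B(z,r)$ contains $\alpha B_X=\beta B_X$ if and only if $r\geq\|z\|+\beta$; hence $X$ fails the $\alpha$-BCP$_\kappa$ exactly when every covering of $S_X$ by at most $\kappa$ closed balls $B(z_i,r_i)$ has some $i$ with $r_i\geq\|z_i\|+\beta$. I would first show this is equivalent to the following \emph{$\kappa$-octahedrality at level $\beta$}: for every subspace $E\subseteq X$ with $\operatorname{dens}(E)\leq\kappa$ and every $\varepsilon>0$ there is $y\in S_X$ with $\|x+y\|\geq\|x\|+\beta-\varepsilon$ for \emph{all} $x\in E$. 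The one nontrivial implication goes by contraposition: if a density-$\leq\kappa$ subspace $E$ and an $\varepsilon>0$ witness the failure, then for each $y\in S_X$ there is $x_y\in E$ with $\|y-(-x_y)\|<\|-x_y\|+\beta-\varepsilon$, so $y$ lies in a ball centred at $-x_y\in E$ with radius strictly below $\|-x_y\|+\beta$; replacing these centres by points of a fixed dense subset $D\subseteq E$ of cardinality $\leq\kappa$ and absorbing the error into the radii (which stay below "$\|\text{centre}\|+\beta$") yields a covering of $S_X$ by $\leq\kappa$ balls, none containing $\alpha B_X$ — a contradiction. It is essential here that $x$ runs over all of $E$, not merely $B_E$, which is legitimate because the radius bound is automatic whatever the centre.

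Assuming $X$ carries some equivalent norm which is $\kappa$-octahedral at some level $\beta>0$, I would deduce $\ell_1(\kappa^+)\subseteq X$ by a transfinite recursion of length $\kappa^+$: I build $(x_\gamma)_{\gamma<\kappa^+}\subseteq S_X$ with $\bigl\|\sum_{\gamma\in F}a_\gamma x_\gamma\bigr\|\geq\tfrac{\beta}{2}\sum_{\gamma\in F}|a_\gamma|$ for every finite $F\subseteq\kappa^+$ and all scalars, which is an isomorphic copy of $\ell_1(\kappa^+)$. At stage $\gamma$ the space $E_\gamma:=\overline{\operatorname{span}}\{x_\delta:\delta<\gamma\}$ has density $\leq|\gamma|\leq\kappa$, so $\kappa$-octahedrality at level $\beta$ (with $\varepsilon=\beta/2$) gives $x_\gamma\in S_X$ with $\|x+x_\gamma\|\geq\|x\|+\beta/2$ for all $x\in E_\gamma$. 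Applying this (after homogenising) to $x=\operatorname{sgn}(a_\gamma)\,u/|a_\gamma|$, where $u=\sum_{\delta\in F\cap\gamma}a_\delta x_\delta$, gives $\|u+a_\gamma x_\gamma\|\geq\|u\|+\tfrac{\beta}{2}|a_\gamma|$. The crucial point — and the place this differs essentially from Godefroy's separable argument — is that the coefficient of $\|u\|$ here is $1$, not a contraction factor, so the constant $\beta/2$ is preserved at each of the $\kappa^+$ steps; a naive multiplicative-octahedrality induction would lose a factor at every step and collapse after $\omega$ steps.

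For the converse, $\ell_1(\kappa^+)\subseteq X\Rightarrow$ renormability, I would fix a copy of the $\ell_1(\kappa^+)$ unit vector basis $(e_\gamma)_{\gamma<\kappa^+}$ inside $X$ and, adapting Godefroy's renorming construction to the transfinite setting, build an equivalent norm $|||\cdot|||$ making $(X,|||\cdot|||)$ $\kappa$-octahedral at every level $\beta<1$ (indeed, one can aim for level $1$). By the first paragraph this forces failure of the $\alpha$-BCP$_\kappa$ for every $\alpha\in(-1,0)$: given a covering of $S_X$ by $\leq\kappa$ balls $B(z_i,r_i)$ with $r_i<\|z_i\|-\alpha$, choose $\beta\in(-\alpha,1)$, apply $\kappa$-octahedrality at level $\beta$ to $E=\overline{\operatorname{span}}\{z_i\}$, and obtain $y\in S_X$ with $\|y-z_i\|\geq\|z_i\|+\beta-\varepsilon>r_i$ for all $i$, contradicting that the balls cover $S_X$. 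The hard part will be the construction of $|||\cdot|||$ itself: one wants each $e_{\gamma^{*}}$ (suitably normalised) to be genuinely $\ell_1$-perpendicular, in the new norm, to the relevant subspaces, i.e.\ one needs the \emph{lower} estimate $|||x+e_{\gamma^{*}}|||\geq|||x|||+1$ (the upper one being the triangle inequality), and this must hold \emph{simultaneously} for all subspaces of density $\leq\kappa$ — with $\gamma^{*}$ chosen outside the $\leq\kappa$ many coordinates needed to approximate a dense subset of the given subspace. Carrying out this transfinite bookkeeping, and checking that the resulting norm is equivalent, is where the real work lies. Finally, the "$\iff$ for some $\alpha$" part of the statement is free: failing for some $\alpha\in(-1,0)$ forces $\ell_1(\kappa^+)\subseteq X$ by the second paragraph (applied to that norm and that $\alpha$), which by the third paragraph gives a renorming failing it for every $\alpha$.
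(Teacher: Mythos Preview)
Your treatment of the direction ``failure of some $\alpha$-BCP$_\kappa$ in some equivalent norm $\Rightarrow \ell_1(\kappa^+)\subseteq X$'' is fine and essentially matches the paper. Your additive reformulation (what you call $\kappa$-octahedrality at level $\beta$) is exactly Proposition~\ref{prop: equivalent conditions coverings} specialised to $C(-\alpha,1,\kappa)$, and your transfinite recursion is the same idea as the paper's Proposition~\ref{prop: X contains ell_1} (which uses Zorn's lemma instead, but the content is identical). Your observation that the coefficient on $\|u\|$ is $1$ rather than a contraction factor---so the lower bound survives $\kappa^+$ steps---is precisely the point.

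The genuine gap is the renorming direction. You describe only the \emph{target} of the construction (``$|||x+e_{\gamma^*}|||\geq|||x|||+1$ for all $x$ in a given subspace, with $\gamma^*$ outside the $\leq\kappa$ coordinates needed to approximate it''), then concede that ``carrying out this transfinite bookkeeping\ldots is where the real work lies'' without doing it. More seriously, your sketch is not quite coherent as stated: the subspace $E$ you must handle is an arbitrary subspace of $X$, not of $\ell_1(\kappa^+)$, so its elements have no ``coordinates'' to speak of, and there is no reason a basis vector $e_{\gamma^*}$ should be $\ell_1$-orthogonal in the ambient norm to a generic $x\in X$. The paper resolves this with a specific and non-obvious device (Lemma~\ref{lem: equivalent norm} and Proposition~\ref{prop: renorm X to fail -alpha-BCP}): first renorm so that $Y=\ell_1(\kappa^+)$ sits isometrically in $X$; then take $p$ to be the \emph{infimum} of all seminorms on $X$ dominated by $\|\cdot\|$ and agreeing with it on $Y$, and invoke a result of Bosenko--Kadets to obtain $\lim_\eta p(x+e_\eta)=p(x)+1$ for \emph{every} $x\in X$; finally set $|||x|||:=p(x)+\|x+Y\|_{X/Y}$. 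The seminorm $p$ is what transfers the $\ell_1$-perpendicularity from $Y$ to all of $X$, and the quotient term is what restores equivalence to the original norm. None of this is visible in your proposal, and without it the reverse implication is unproved.
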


Let us now describe the organization of the paper. In Section~\ref{sec: preliminaries} we introduce some notation for coverings and collect some elementary properties of them, which will be used throughout the paper. Also, we define the $\alpha$-BCP$_\kappa$ and show that a Banach space fails the $\alpha$-BCP$_{<\omega}$ for all $\alpha\in (-1,1)$ if and only if it is octahedral (see Proposition~\ref{prop: finite BCP and OH}). Section~3 is purely devoted to proving the promised characterization of Banach spaces containing $\ell_1(\kappa)$ (see Theorem~\ref{thm: renorming ell_1(kappa)}). In Section~\ref{sec: direct sums} we investigate the stability of coverings in direct sums of Banach spaces. On one hand these results complement the existing ones from the literature. On the other hand, we will use them in Section~\ref{sec: examples} to construct various spaces with and without the $\alpha$-BCP$_\kappa$. We begin Section~\ref{sec: examples} by proving that in Lipschitz spaces the notions of octahedral norm and failure of the $(-1)$-BCP coincide (see Proposition~\ref{prop: OH and failure of (-1)-BCP are the same in Lip(M)}). In the remaining part of Section~\ref{sec: examples} we clarify the interrelations of different versions of the $\alpha$-BCP$_\kappa$ and octahedral norms (see Figure~\ref{fig: diagram}). Namely, we prove that there exists a Banach space, which fails the $\alpha$-BCP$_{\kappa}$ for every $\alpha\in(-1,1)$, but that has the $(-1)$-BCP (see Theorem~\ref{thm: X fails (-alpha)-BCP for all alpha in (0,1)}). We also introduce the natural notion of a $\kappa$-octahedral norm (see Definition~\ref{def: kappa-octahedrality}) and show that there exists a Banach space which is $\kappa$-octahedral, but that has the $\alpha$-BCP for all $\alpha\in [-1,0)$. Finally, in Section~\ref{sec: questions} we point out some open questions that are suggested by the current work. 

We pass now to introduce some notation. We consider only real Banach spaces. For a Banach space $X$, $\Xs$ denotes its topological dual and $B(x,r)$ the open ball centered in $x$ of radius $r$. By $B_X$ we denote the closed unit ball and by $S_X$ the unit sphere of $X$. By dens$(E)$ we represent the density of a topological space $E$ and by $\supp(f)$ the support of a function $f$. For a cardinal $\kappa$, by cf$(\kappa)$ we denote its cofinality and by $\kappa^+$ its successor cardinal.

Given a metric space $M$ with a designated origin $0$, we will
denote by $\text{Lip}_0(M)$ the Banach space of all real-valued Lipschitz functions on $M$ which vanish at $0$. 

\section{Preliminaries}\label{sec: preliminaries}
    
	Let $X$ be a normed space, $A\subset X$, $\kappa$ a cardinal and $\alpha,\beta\in [-1,1]$. Define
	$$C(\alpha,\beta,A):=\{x\in X:\exists a\in A\hspace{1mm}(\|x-a\|<\alpha\|x\|+\beta\|a\|)\}$$
	and
	$$C(\alpha,\beta,\kappa):=\{B\subset C(\alpha,\beta,A):\text{ for some set }A\text{ of cardinality }\kappa\}.$$
	Notice that this notation can represent coverings of the sphere since the condition $S_X\in C(\alpha,\beta,\kappa)$ holds if and only if there exists a set $A\subset X$ of cardinality $\kappa$ such that
	$$S_X\subset\bigcup_{a\in A}B(a,\alpha+\beta\|a\|).$$
	
	Now we list some elementary but useful properties regarding the set $C(\alpha,\beta,A)$.
	
	\begin{remark}\label{rem: properties of coverings}
	Let $X$ be a normed space, $\alpha,\beta\in [-1,1]$ and $A,B\subset X$. Then
		\begin{enumerate}
		\item[$(a)$] $C(\alpha,\beta,A)=C(\alpha,\beta,\overline{A})$.
		\item[$(b)$] If $B\subset C(\alpha,\beta,A)$, then $tB\subset C(\alpha,\beta,tA)$ for all $t>0$.
		\item[$(c)$] If $S_X\subset C(\alpha,\beta,A)$, then $X\setminus\{0\}=C(\alpha,\beta,\mathbb{R}^+A)$.
		\item[$(d)$] Let $\{t_a:a\in A\}\subset [1,\infty)$. If $B\subset C(\alpha,1,A)$, then
	$$B\subset C(\alpha,1,\{t_aa:a\in A\}).$$
	\end{enumerate}
	
\end{remark}

\begin{proposition}\label{prop: equivalent conditions coverings}
	Let $X$ be a normed space, $\kappa$ an infinite cardinal and $\alpha,\beta\in[-1,1]$. Then the following are equivalent:
	\begin{itemize}
	    \item[$(i)$] $S_X\notin C(\alpha,\beta,\kappa)$.
	    \item[$(ii)$] $X\setminus\{0\}\notin C(\alpha,\beta,\kappa)$.
		\item[$(iii)$] For all subspaces $Y\subset X$ with $\dens(Y)\le\kappa$ there exists $x\in S_X$ such that for all $\lambda\in\mathbb{R}$ and $y\in Y$ we have $\|\lambda x+y\|\ge\alpha|\lambda|+\beta\|y\|$.
	\end{itemize}
\end{proposition}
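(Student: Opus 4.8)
The plan is to establish the cyclic chain $(i)\Rightarrow(ii)\Rightarrow(iii)\Rightarrow(i)$ (tacitly assuming $X\neq\{0\}$). The implication $(i)\Rightarrow(ii)$ is pure monotonicity: should a set $A$ of cardinality $\kappa$ satisfy $X\setminus\{0\}\subset C(\alpha,\beta,A)$, then a fortiori $S_X\subset C(\alpha,\beta,A)$, so $S_X\in C(\alpha,\beta,\kappa)$; contraposing gives the claim.

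The heart of the argument is $(ii)\Rightarrow(iii)$, which I would prove contrapositively. Assume $(iii)$ fails, and let a subspace $Y\subset X$ with $\dens(Y)\le\kappa$ witness this: for every $x\in S_X$ there are $\lambda\in\mathbb{R}$ and $y\in Y$ with $\|\lambda x+y\|<\alpha|\lambda|+\beta\|y\|$. I would first observe that necessarily $\lambda\neq0$, since at $\lambda=0$ the inequality reads $(1-\beta)\|y\|<0$, which is impossible because $\beta\le1$. Dividing by $|\lambda|$ and absorbing the sign into $x$, we obtain $\|\pm x+y'\|<\alpha+\beta\|y'\|$ for some $y'\in Y$, i.e.\ $\pm x\in C(\alpha,\beta,Y)$; as $Y$ is a subspace, $C(\alpha,\beta,Y)$ is symmetric about $0$, whence $x\in C(\alpha,\beta,Y)$. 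Therefore $S_X\subset C(\alpha,\beta,Y)$, and since $\mathbb{R}^{+}Y=Y$, Remark~\ref{rem: properties of coverings}$(c)$ upgrades this to $X\setminus\{0\}=C(\alpha,\beta,Y)$. Finally, choosing $D\subset Y$ dense with $|D|=\dens(Y)\le\kappa$, Remark~\ref{rem: properties of coverings}$(a)$ yields $C(\alpha,\beta,Y)=C(\alpha,\beta,\overline{Y})=C(\alpha,\beta,\overline{D})=C(\alpha,\beta,D)$; padding $D$ up to cardinality exactly $\kappa$ (which only enlarges the set $C(\alpha,\beta,\cdot)$) shows $X\setminus\{0\}\in C(\alpha,\beta,\kappa)$, i.e.\ $(ii)$ fails.

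The remaining implication $(iii)\Rightarrow(i)$ I would again do by contraposition, and it is straightforward: if $(i)$ fails, fix $A\subset X$ of cardinality $\kappa$ with $S_X\subset C(\alpha,\beta,A)$ and set $Y=\cspn(A)$, a subspace with $\dens(Y)\le\kappa$ (its rational span is dense and has cardinality $\le\kappa$). For any $x\in S_X$ pick $a\in A$ with $\|x-a\|<\alpha\|x\|+\beta\|a\|$; then $\lambda=1$ and $y=-a\in Y$ satisfy $\|\lambda x+y\|<\alpha|\lambda|+\beta\|y\|$, so no $x\in S_X$ meets the requirement of $(iii)$ for this $Y$, and $(iii)$ fails.

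I expect the only genuinely delicate point to be $(ii)\Rightarrow(iii)$: turning the affine inequality $\|\lambda x+y\|<\alpha|\lambda|+\beta\|y\|$ into an actual ball-covering, first of $S_X$ and then of all of $X\setminus\{0\}$, exploits the linearity of $Y$ three separate times (to rule out $\lambda=0$ using $\beta\le1$, to symmetrize, and to rescale via Remark~\ref{rem: properties of coverings}$(c)$), and one must then replace $Y$ --- whose cardinality may exceed $\kappa$ --- by a dense subset of cardinality $\kappa$ through Remark~\ref{rem: properties of coverings}$(a)$. The rest is routine bookkeeping.
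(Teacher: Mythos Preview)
Your proof is correct and uses essentially the same ingredients as the paper's --- scaling by $|\lambda|$ to pass between the covering inequality and the subspace formulation, and Remark~\ref{rem: properties of coverings}(a),(c) to move between a set of size $\kappa$ and a subspace of density $\le\kappa$. The only cosmetic difference is that the paper runs the cycle in the opposite direction, proving $(ii)\Rightarrow(i)\Rightarrow(iii)\Rightarrow(ii)$, with the substantive step being a direct proof of $(i)\Rightarrow(iii)$ rather than your contrapositive $(ii)\Rightarrow(iii)$; your treatment of the $\lambda=0$ case and the cardinality padding is, if anything, more explicit than the paper's.
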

\begin{proof}
    $(ii)\implies(i)$ follows from Remark~\ref{rem: properties of coverings}(a) and (c).
    
    $(i)\implies (iii)$. Assume that $Y=\overline{A}$ for some set $A\subset X$ of cardinality $\kappa$. Remark~\ref{rem: properties of coverings}(a) implies that $S_X\not\subset C(\alpha,\beta,A)=C(\alpha,\beta,Y)$. Therefore there exists $x\in S_X$ such that $\|x+y\|\ge \alpha+\beta\|y\|$ for all $y\in Y$, thus for all $\lambda\in\mathbb{R}\setminus\{0\}$ and $y\in Y$
    $$\|\lambda x+y\|=|\lambda|\|x+y/\lambda\|\ge|\lambda|(\alpha+\beta\|y/\lambda\|)=\alpha|\lambda|+\beta\|y\|.$$
    $(iii)\implies (ii)$ is trivial.
\end{proof}

We are now ready to extend the $\alpha$-BCP to bigger cardinals.

\begin{definition}\label{def: alpha-BCP_kappa}
	Let $X$ be a normed space, $\kappa$ an infinite cardinal and $\alpha\in [-1,1)$. We say that $X$ has the \emph{$\alpha$-BCP$_{\kappa}$} if $S_X\in C(-\alpha,1,\kappa)$.
\end{definition}

In this notation the usual $\alpha$-BCP corresponds to the $\alpha$-BCP$_{\omega}$. By Proposition~\ref{prop: equivalent conditions coverings}, a normed space $X$ has the $\alpha$-BCP$_\kappa$ if and only if $X\setminus\{0\}\in C(-\alpha,1,\kappa)$. Moreover, as already noted earlier, $X$ has the $\alpha$-BCP$_\kappa$ if and only if there exists a set $A\subset X$ of cardinality $\kappa$ such that
$$S_X\subset\bigcup_{a\in A} B(a,\|a\|-\alpha).$$

In addition, let us now consider the following finite version of the BCP.

\begin{definition}\label{def: alpha-BCP_finite}
    Let $X$ be a normed space and $\alpha\in [-1,1)$. We say that $X$ has the \emph{$\alpha$-BCP$_{<\omega}$} if there exists a finite set $A\subset X$ such that $S_X\in C(-\alpha,1,A)$.
\end{definition}

We observe now that octahedral norms can be characterized in terms of failing the $\alpha$-BCP$_{<\omega}$.

\begin{proposition}\label{prop: finite BCP and OH}
    Let $X$ be a normed space. Then the following are equivalent:
    \begin{itemize}
        \item [$(i)$] $X$ is octahedral.
        \item [$(ii)$] For all $n\in\mathbb{N}$, $a_1,\ldots,a_n\in S_X$ and $\varepsilon>0$ there exists $x\in S_X$ such that
        $$\|x-a_j\|\ge 2-\varepsilon\hspace{2mm}\text{ for all $j=1,\ldots,n$.}$$
        \item [$(iii)$] $X$ fails the $\alpha$-BCP$_{<\omega}$ for all $\alpha\in(-1,1)$.
    \end{itemize}
\end{proposition}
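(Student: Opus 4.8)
The plan is to establish the cycle $(i)\Rightarrow(ii)\Rightarrow(i)$ together with $(i)\Rightarrow(iii)\Rightarrow(ii)$, which yields the three-way equivalence. It is convenient to record first that, unwinding the definition of $C(-\alpha,1,A)$ and using positive homogeneity of the norm, the statement ``$X$ fails the $\alpha$-BCP$_{<\omega}$'' means exactly: for every finite $A\subset X$ there is $x\in S_X$ with $\|x-a\|\ge\|a\|-\alpha$ for all $a\in A$.

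Three of the four implications are essentially bookkeeping with $\varepsilon$'s. For $(i)\Rightarrow(ii)$, given $a_1,\dots,a_n\in S_X$ and $\varepsilon>0$, I apply octahedrality to $E=\spn\{a_1,\dots,a_n\}$ with parameter $\varepsilon/2$ and evaluate the resulting inequality at $x=-a_j$, obtaining one $y\in S_X$ with $\|y-a_j\|\ge(1-\varepsilon/2)\cdot 2=2-\varepsilon$ for every $j$. For $(iii)\Rightarrow(ii)$, given $a_1,\dots,a_n\in S_X$ and (without loss of generality, since the condition weakens as $\varepsilon$ grows) $\varepsilon\in(0,2)$, I set $\alpha:=-1+\varepsilon\in(-1,1)$ and invoke the failure of the $\alpha$-BCP$_{<\omega}$ for the finite set $\{a_1,\dots,a_n\}$: the witness $x\in S_X$ satisfies $\|x-a_j\|\ge\|a_j\|-\alpha=2-\varepsilon$. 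For $(i)\Rightarrow(iii)$, I fix $\alpha\in(-1,1)$ and a finite $A\subset X$, put $M:=\max_{a\in A}\|a\|$, and apply octahedrality to $\spn A$ with the specific parameter $\varepsilon:=(1+\alpha)/(M+1)>0$; evaluating at $x=-a$ gives $\|y-a\|\ge(1-\varepsilon)(\|a\|+1)$, and the choice of $\varepsilon$ is precisely what forces $(1-\varepsilon)(\|a\|+1)\ge\|a\|-\alpha$ simultaneously for all $a\in A$ (the case $a=0$ being immediate since $\alpha>-1$). Hence some $x\in S_X$ lies outside $C(-\alpha,1,A)$, so the $\alpha$-BCP$_{<\omega}$ fails for every $\alpha\in(-1,1)$.

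The one implication needing a genuine argument is $(ii)\Rightarrow(i)$. Given a finite-dimensional subspace $E\subset X$ and $\varepsilon>0$, I use compactness of $S_E$ to choose a finite $(\varepsilon/2)$-net $e_1,\dots,e_n\in S_E$; applying $(ii)$ to the points $-e_1,\dots,-e_n\in S_X$ with parameter $\varepsilon/2$ produces $y\in S_X$ with $\|y+e_j\|\ge 2-\varepsilon/2$ for all $j$. For an arbitrary $x\in S_E$, picking $e_j$ with $\|x-e_j\|\le\varepsilon/2$ yields $\|x+y\|\ge\|e_j+y\|-\|x-e_j\|\ge 2-\varepsilon$. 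To upgrade this from $S_E$ to all of $E$, as the definition of octahedrality demands, for $x\in E\setminus\{0\}$ I take a norming functional $\phi\in S_{X^*}$ of $\tfrac{x}{\|x\|}+y$; from $\phi\bigl(\tfrac{x}{\|x\|}+y\bigr)\ge 2-\varepsilon$ one reads off $\phi\bigl(\tfrac{x}{\|x\|}\bigr)\ge 1-\varepsilon$ and $\phi(y)\ge 1-\varepsilon$, whence $\|x+y\|\ge\phi(x)+\phi(y)\ge(1-\varepsilon)(\|x\|+\|y\|)$; the case $x=0$ is trivial. This is exactly octahedrality.

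A word on why the cycle is arranged to avoid a direct proof of $(ii)\Rightarrow(iii)$: the subtlety is that $(iii)$ allows centers $a$ of arbitrarily large norm, and then $B(a,\|a\|-\alpha)$ has large radius and its trace on $S_X$ is a ``cap'' that does \emph{not} shrink as $\|a\|\to\infty$, so one cannot simply normalise the centers and quote $(ii)$. Passing through octahedrality circumvents this, since the multiplicative estimate $\|x+y\|\ge(1-\varepsilon)(\|x\|+\|y\|)$ dominates the additive target $\|x-a\|\ge\|a\|-\alpha$ uniformly over a fixed finite family once $\varepsilon$ is small relative to $\max_a\|a\|$. I therefore expect the only mildly delicate point in the write-up to be the $\varepsilon$-budgeting in $(ii)\Rightarrow(i)$, with everything else routine.
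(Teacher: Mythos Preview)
Your proof is correct and follows essentially the same logical route as the paper: both arguments establish $(i)\Longleftrightarrow(ii)$, then $(i)\Rightarrow(iii)$ by choosing an octahedrality parameter small relative to $\max_{a\in A}\|a\|$ (your $\varepsilon=(1+\alpha)/(M+1)$ is exactly the paper's $\beta$ written as $1-\varepsilon$), and finally observe that $(iii)\Rightarrow(ii)$ is immediate by taking $\alpha=-1+\varepsilon$. The only difference is that the paper outsources $(i)\Longleftrightarrow(ii)$ to \cite[Proposition~2.2]{HLP}, whereas you supply a self-contained proof via a finite net in $S_E$ and a norming-functional trick to pass from $S_E$ to all of $E$; this makes your write-up more self-contained but is not a different strategy.
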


\begin{proof}
    $(i)\iff (ii)$ is known from \cite[Proposition 2.2]{HLP}.
    
     $(i)\implies(iii)$. Let $A\subset X$ be a finite set and $\alpha\in (-1,1)$. Find $\beta\in(0,1)$ such that $\beta\ge(-\alpha+\|a\|)/(1+\|a\|)$ for all $a\in A$. Since $X$ has an octahedral norm, then there exists $x\in S_X$ such that 
    $$\|x-a\|\ge\beta(1+\|a\|)\ge-\alpha+\|a\|$$
    for all $a\in A$. Thus, $X$ fails the $\alpha$-BCP$_{<\omega}$ for all $\alpha\in(-1,1)$.
    
    $(iii)\implies(ii)$ is obvious.
\end{proof}

\begin{remark}\label{rem: rephrase Godefroy's theorem}
Using Proposition~\ref{prop: finite BCP and OH}, we can now rephrase Theorem~\ref{thm: godefroy contain ell1} as follows. 

Let $X$ be a Banach space. Then $X$ contains an isomorphic copy of $\ell_1$ if and only if it can be equivalently renormed to fail the $\alpha$-BCP$_{<\omega}$ for all $\alpha\in(-1,1)$.
\end{remark}

\section{Characterizing Banach spaces containing $\ell_1(\kappa)$}\label{sec: main result}

We can now state and prove the main result of this paper.

\begin{theorem}\label{thm: renorming ell_1(kappa)}
	Let $X$ be a Banach space and $\kappa$ an infinite cardinal. Then $X$ contains an isomorphic copy of $\ell_1(\kappa^+)$ if and only if it can be equivalently renormed to fail the $\alpha$-BCP$_{\kappa}$ for every $($or some$)$ $\alpha\in (-1,0)$.
\end{theorem}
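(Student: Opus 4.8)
The plan is to prove both implications by connecting the failure of the $\alpha$-BCP$_{\kappa}$ to a higher-cardinality analogue of octahedrality, using the characterization in Proposition~\ref{prop: equivalent conditions coverings}(iii). For a fixed $\alpha\in(-1,0)$, write $\beta=-\alpha\in(0,1)$. By Proposition~\ref{prop: equivalent conditions coverings}, $(X,\nn\cdot)$ fails the $\alpha$-BCP$_{\kappa}$ precisely when for every subspace $Y\subset X$ with $\dens(Y)\le\kappa$ there is $x\in S_X$ with $\nn{\lambda x+y}\ge\beta|\lambda|+\nn y$ for all $\lambda\in\real$ and $y\in Y$; taking $\lambda=0$ we see in particular that $\nn{x+y}\ge\beta(1+\nn y)$ for all $y$ with $\nn y\le 1$ in $Y$, i.e. $x$ is ``$\kappa$-octahedrally placed'' relative to every $\le\kappa$-dense subspace. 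So the theorem reduces to: $X$ contains $\ell_1(\kappa^+)$ iff $X$ admits an equivalent norm with this property for some/every $\beta\in(0,1)$.

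First I would prove the easy direction, that $\ell_1(\kappa^+)\hookrightarrow X$ implies the renorming exists. If $Z\subset X$ is isomorphic to $\ell_1(\kappa^+)$, transport the $\ell_1(\kappa^+)$-norm to $Z$ and extend it to an equivalent norm on $X$ (e.g. via a bounded projection-free argument: take the norm whose unit ball is $\overline{\co}(B_X\cup\lambda B_Z^{\mathrm{new}})$ for suitable $\lambda$, or more safely renorm $X$ so that $Z$ becomes isometric to $\ell_1(\kappa^+)$ and the inclusion is still an isomorphism — this is standard). Then given a subspace $Y$ with $\dens(Y)\le\kappa$, only $\le\kappa$ of the unit-vector coordinates $(e_\gamma)_{\gamma<\kappa^+}$ of $\ell_1(\kappa^+)$ can be ``close'' to $Y$ in the relevant sense; pick a coordinate $e_{\gamma_0}$ whose support is disjoint from the (at most $\kappa$-sized) set of coordinates appearing in a dense subset of $Y$. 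Then $x:=e_{\gamma_0}$ (normalized) satisfies $\nn{\lambda x+y}\gtrsim|\lambda|+\nn y$ because in $\ell_1$ disjointly supported vectors add isometrically; a density/perturbation argument upgrades this from a dense subset of $Y$ to all of $Y$ and produces the constant $\beta$. Since this works for the specific renorming, it holds ``for some $\alpha$'', and a routine rescaling of $\beta$ shows it then holds for every $\alpha\in(-1,0)$.

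The converse is the heart of the matter and the main obstacle. Assume $(X,\nn\cdot)$ fails the $\alpha$-BCP$_{\kappa}$ for some $\alpha\in(-1,0)$, so the $\kappa$-octahedral selection property above holds with some $\beta>0$. I would build, by transfinite recursion on $\gamma<\kappa^+$, a family $(x_\gamma)_{\gamma<\kappa^+}$ in $S_X$ together with norming functionals, such that each $x_\gamma$ is chosen (using the hypothesis applied to $Y_\gamma:=\cspn\{x_\delta:\delta<\gamma\}$, which has density $\le|\gamma|\le\kappa$) to satisfy $\nn{\sum_{i} \lambda_i x_{\delta_i} + \mu x_\gamma}\ge \beta|\mu| + \nn{\sum_i\lambda_i x_{\delta_i}}$ for $\delta_i<\gamma$. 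Iterating this estimate down a finite chain $\gamma_1<\dots<\gamma_n$ yields $\nn{\sum_{j=1}^n\lambda_j x_{\gamma_j}}\ge\beta\sum_{j=1}^n|\lambda_j|$ (peeling off the top term each time), while the upper bound $\le\sum|\lambda_j|$ is automatic. Hence $(x_\gamma)_{\gamma<\kappa^+}$ is equivalent to the $\ell_1(\kappa^+)$-basis, giving the embedding. The delicate points are: ensuring the recursion goes through at limit stages (density bound $\le\kappa$ is preserved since $\kappa^+$ is regular and each initial segment has size $\le\kappa$); and making sure the one-step estimate really does iterate to the full finite-sum lower bound — this is where one must be careful that $\beta$ does not degrade, which it does not because each step only multiplies the \emph{newest} coefficient by $\beta$ and leaves the previously-assembled sum untouched. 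A small additional check is that failing the $\alpha$-BCP$_\kappa$ for \emph{some} $\alpha\in(-1,0)$ is equivalent to failing it for \emph{every} such $\alpha$ once an $\ell_1(\kappa^+)$-copy is available (run the forward direction), closing the ``for every (or some)'' loop.
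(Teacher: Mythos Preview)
Your converse direction is correct and essentially the same as the paper's Proposition~\ref{prop: X contains ell_1}: you build the $\ell_1(\kappa^+)$-basis by transfinite recursion, while the paper phrases it as a Zorn's Lemma argument on maximal ``good'' sets; the peeling-off estimate you describe is exactly the computation behind both.

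The forward direction, however, has a genuine gap. Merely renorming $X$ so that the copy $Z\cong\ell_1(\kappa^+)$ sits isometrically does \emph{not} force the basis vectors $e_\gamma$ to satisfy the inequality $\nn{e_\gamma-a}\ge -\alpha+\nn{a}$ for arbitrary $a\in X$; your argument that ``in $\ell_1$ disjointly supported vectors add isometrically'' only controls $\nn{e_\gamma-a}$ for $a\in Z$, not for $a$ in an arbitrary subspace $Y\subset X$. Concretely, take $X=\ell_1(\kappa^+)\oplus_\infty\mathbb{R}$ with $Z=\ell_1(\kappa^+)\times\{0\}$ already isometric. For $a=(0,t)$ with $t\ge 1$ one has $\nn{e_\gamma-a}=\max(1,t)=t=\nn{a}$, which is strictly smaller than $-\alpha+\nn{a}$ for any $\alpha\in(-1,0)$; thus no $e_\gamma$ witnesses failure of the $\alpha$-BCP$_\kappa$ against the countable set $A=\{(0,t):t\in\mathbb{Q}\}$, and your candidate renorming does not work.

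The paper's fix (Proposition~\ref{prop: renorm X to fail -alpha-BCP} and Lemma~\ref{lem: equivalent norm}) is substantially more delicate: one takes the \emph{minimal} seminorm $p$ dominated by $\n\cdot$ and agreeing with it on $Z$, invokes a result of Bosenko--Kadets to get $\lim_\eta p(x+e_\eta)=p(x)+1$ for \emph{every} $x\in X$, and then sets $\nn{x}:=p(x)+\n{x+Z}_{X/Z}$. This construction is precisely what propagates the $\ell_1$-additivity of $e_\eta$ from $Z$ to all of $X$, and it is the missing idea in your sketch.
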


The proof follows immediately from Propositions~\ref{prop: X contains ell_1} and \ref{prop: renorm X to fail -alpha-BCP} combined with the fact that cf$(\kappa^+)=\kappa^+$ for an infinite cardinal $\kappa$.

In \cite[Proposition~23]{Guirao2019} it was shown that if $X$ fails the $(-1)$-BCP, then $X$ contains $\ell_1(\Gamma)$ for some uncountable set $\Gamma$. Using essentially the same scheme, we generalize this result to arbitrary infinite cardinals, while (strictly) weakening the assumption on $X$. Indeed, in Theorem~\ref{thm: X fails (-alpha)-BCP for all alpha in (0,1)}, we will prove, for all infinite cardinals $\kappa$, the existence of a Banach space failing the $\alpha$-BCP$_\kappa$ for all $\alpha\in (-1,1)$, but having the $(-1)$-BCP.

\begin{proposition}\label{prop: X contains ell_1}
	Let $X$ be a Banach space, $\alpha \in [-1, 0)$ and $\kappa$ an infinite cardinal. If $X$ fails the $\alpha$-BCP$_{\kappa}$, then it contains an isomorphic copy of $\ell_1(\kappa^+)$.
\end{proposition}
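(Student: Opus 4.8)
The plan is to build the copy of $\ell_1(\kappa^+)$ by a transfinite recursion of length $\kappa^+$, exploiting the characterization of failing the $\alpha$-BCP$_\kappa$ provided by Proposition~\ref{prop: equivalent conditions coverings}(iii). Since $X$ fails the $\alpha$-BCP$_\kappa$, we have $S_X\notin C(-\alpha,1,\kappa)$, so for every subspace $Y\subset X$ with $\dens(Y)\le\kappa$ there exists $x\in S_X$ such that $\|\lambda x+y\|\ge -\alpha|\lambda|+\|y\|$ for all $\lambda\in\mathbb{R}$ and $y\in Y$; write $\beta:=-\alpha\in(0,1]$. First I would fix an ordinal enumeration of $\kappa^+$ and recursively choose, for each $\gamma<\kappa^+$, a point $x_\gamma\in S_X$ such that
\[
\Bigl\|\lambda x_\gamma+\sum_{i}\mu_i x_{\delta_i}\Bigr\|\ge\beta|\lambda|+\Bigl\|\sum_i\mu_i x_{\delta_i}\Bigr\|
\]
for all finite subsets $\{\delta_i\}\subset\gamma$ and all scalars. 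The point is that at stage $\gamma$ the closed linear span $Y_\gamma:=\cspn\{x_\delta:\delta<\gamma\}$ has density at most $|\gamma|\le\kappa$, so such an $x_\gamma$ is available; the displayed inequality for $x_\gamma$ against arbitrary elements of $Y_\gamma$ is exactly condition (iii).

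Next I would show that the family $\{x_\gamma:\gamma<\kappa^+\}$ is equivalent to the $\ell_1(\kappa^+)$-basis. The upper estimate $\|\sum_\gamma\mu_\gamma x_\gamma\|\le\sum_\gamma|\mu_\gamma|$ is immediate from $\|x_\gamma\|=1$. For the lower estimate, given a finitely supported family $(\mu_\gamma)$ with support $\{\gamma_1<\dots<\gamma_n\}$, I peel off the largest index: applying the inequality for $x_{\gamma_n}$ against $\sum_{k<n}\mu_{\gamma_k}x_{\gamma_k}\in Y_{\gamma_n}$ gives
\[
\Bigl\|\sum_{k\le n}\mu_{\gamma_k}x_{\gamma_k}\Bigr\|\ge\beta|\mu_{\gamma_n}|+\Bigl\|\sum_{k<n}\mu_{\gamma_k}x_{\gamma_k}\Bigr\|,
\]
and iterating this down to $k=1$ yields $\|\sum_k\mu_{\gamma_k}x_{\gamma_k}\|\ge\beta\sum_k|\mu_{\gamma_k}|$. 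Hence $\gamma\mapsto x_\gamma$ extends to an isomorphic embedding of $\ell_1(\kappa^+)$ into $X$ with constants $\beta$ and $1$, which finishes the proof.

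The main obstacle is purely bookkeeping at limit stages of the recursion: one must be sure that $\dens(Y_\gamma)\le\kappa$ remains true for every $\gamma<\kappa^+$, which uses that $|\gamma|\le\kappa$ throughout (since $\gamma<\kappa^+$) together with the fact that the closed span of at most $\kappa$ vectors has density at most $\kappa$. No issue of cofinality or regularity of $\kappa$ itself intervenes here — unlike in the proof of the full Theorem~\ref{thm: renorming ell_1(kappa)}, where cf$(\kappa^+)=\kappa^+$ is needed for the converse direction — because each $Y_\gamma$ is indexed by an initial segment of $\kappa^+$ of size $\le\kappa$, and the estimate in the previous paragraph only ever inspects finitely many of the $x_\gamma$ at a time.
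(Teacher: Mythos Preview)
Your proof is correct and rests on the same key step as the paper's: any subset of $S_X$ of cardinality at most $\kappa$ can be enlarged by one more point via Proposition~\ref{prop: equivalent conditions coverings}(iii), preserving the $\ell_1$-type lower estimate. The only difference is packaging --- the paper applies Zorn's Lemma to the family of ``good'' subsets of $S_X$ (those satisfying $\|\sum\lambda_i a_i\|\ge-\alpha\sum|\lambda_i|$) and argues by contradiction that a maximal one must have cardinality at least $\kappa^+$, whereas you build the family directly by transfinite recursion of length $\kappa^+$ and peel off indices to verify the lower bound.
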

\begin{proof}
We will follow the ideas of \cite[Proposition 23]{Guirao2019}. It suffices to show that there is a set $A\subset S_X$ of cardinality $\kappa^+$ such that
	$$\|\sum_{i=1}^n\lambda_ia_i\|\ge-\alpha\sum_{i=1}^n|\lambda_i|$$
	for all $n\in\mathbb{N}$, $\{a_1, \dots, a_n\}\subset A$ and $\{\lambda_1, \dots, \lambda_n\} \subset \mathbb R$. 
	
	Call any set (not necessarily of cardinality $\kappa^+$) satisfying this property \emph{good} and let $\cl P$ be the partially ordered set of all good sets in $S_X$.
	
	Notice that $\cl P\not=\emptyset$ since $\{x\} \in \cl P$ for any $x \in S_X$. Moreover, for any chain $(A_{\eta}) \subset \cl P$, we have that $\bigcup_{\eta} A_{\eta}\in\cl P$ is an upper bound. By Zorn's Lemma $\cl P$ contains a maximal element $A$. 
	
	We claim that $|A| \geq \kappa^+$. Assume by contradiction otherwise. Since $X$ fails the $\alpha$-BCP$_{\kappa}$, by Proposition~\ref{prop: equivalent conditions coverings}, we can find $x\in S_X$ such that
	$\n{\lambda x+a} \ge-\alpha |\lambda|+\|a\|$ for all $a\in\spn(A)$ and $\lambda \in \mathbb R$. In particular
	$$\|\lambda x+\sum_{i=1}^n\lambda_ia_i\|\ge
	-\alpha\big(|\lambda|+\sum_{i=1}^n|\lambda_i|\big)$$
	holds for all $n\in\mathbb{N}$, $\{a_1, \dots, a_n\} \subset A$ and $\{\lambda, \lambda_1, \dots, \lambda_n\} \subset \mathbb R$. This means that $A\cup \{x\} \in \cl P$, which contradicts the maximality of $A$.
\end{proof}

The second ingredient in the proof of Theorem~\ref{thm: renorming ell_1(kappa)} is the following result. The cornerstone of its proof is the norm construction technique from \cite[Section 4]{Kadets2011}.

\begin{proposition}\label{prop: renorm X to fail -alpha-BCP}
	Let $X$ be a Banach space and $\kappa$ an infinite cardinal. If $\ell_1(\kappa)\subset X$ isomorphically, then there exists an equivalent norm $\nn{\cdot}$ on $X$ such that $(X,\nn{\cdot})$ fails the $\alpha$-BCP$_{\eta}$ for all $\alpha\in(-1,1)$ and $\eta<\text{cf}(\kappa)$.
\end{proposition}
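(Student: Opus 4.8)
The plan is to reduce to the case where $\ell_1(\kappa)$ sits inside $X$ as a \emph{norm-one complemented} (or at least isometrically embedded) subspace after a preliminary renorming, and then to build the target norm by the "sup of an affine family" construction from \cite[Section 4]{Kadets2011}. Concretely, since $\ell_1(\kappa)\subset X$ isomorphically, fix the corresponding isomorphic embedding $j\colon\ell_1(\kappa)\to X$ and let $(e_\gamma)_{\gamma<\kappa}$ denote the unit vectors of $\ell_1(\kappa)$; write $x_\gamma:=j(e_\gamma)$, so that the family $(x_\gamma)$ is bounded below and above and spans a copy of $\ell_1(\kappa)$ inside $X$. After rescaling we may assume $\|x_\gamma\|\le 1$ for all $\gamma$ and that $\|\sum\lambda_\gamma x_\gamma\|\ge c\sum|\lambda_\gamma|$ for some $c>0$. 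The first real step is to define, for each finite $F\subset\kappa$ and each choice of signs, an affine functional on $X$ witnessing the $\ell_1$-behaviour, and to take $\nn{x}$ to be the supremum of $\|x\|$ (the old norm, suitably scaled) together with all these functionals; one checks this sup is an equivalent norm.

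The key point is what this new norm does to coverings. Suppose towards a contradiction that $(X,\nn{\cdot})$ has the $\alpha$-BCP$_\eta$ for some $\alpha\in(-1,1)$ and some $\eta<\operatorname{cf}(\kappa)$; by Proposition~\ref{prop: equivalent conditions coverings} this means there is a subspace $Y\subset X$ with $\dens(Y)\le\eta$ such that \emph{no} $x\in S_X$ satisfies $\nn{\lambda x+y}\ge(-\alpha)|\lambda|+\nn{y}$ for all $\lambda\in\mathbb R$, $y\in Y$. Since $\dens(Y)\le\eta<\operatorname{cf}(\kappa)$, a countable (indeed $\le\eta$-sized) dense subset of $Y$ touches only $<\kappa$ many coordinates $\gamma$, so there is an index $\gamma_0<\kappa$ with $x_{\gamma_0}$ "almost disjoint" from $Y$ in the $\ell_1$-sense. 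The plan is then to show that $x:=x_{\gamma_0}/\nn{x_{\gamma_0}}$ (or a small perturbation of it) \emph{does} satisfy the required inequality for the new norm: this is exactly where the affine functionals built into $\nn{\cdot}$ pay off, because evaluating the functional attached to a finite set $F\ni\gamma_0$ at $\lambda x+y$ gives a lower bound close to $(-\alpha)|\lambda|+\nn{y}$ after using that $Y$ barely involves $\gamma_0$. This contradicts the choice of $Y$, and since $\alpha\in(-1,1)$ and $\eta<\operatorname{cf}(\kappa)$ were arbitrary, the proposition follows.

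I expect the main obstacle to be the bookkeeping in the previous paragraph: making precise the sense in which "$x_{\gamma_0}$ is almost disjoint from $Y$" and propagating that through the supremum defining $\nn{\cdot}$ to recover the covering-avoidance inequality with the correct constant $-\alpha$ (rather than a worse constant depending on $\alpha$). The cofinality hypothesis $\eta<\operatorname{cf}(\kappa)$ is essential precisely here — it guarantees that a $\dens(Y)$-sized family of vectors cannot have supports cofinal in $\kappa$, so a "free" coordinate $\gamma_0$ always survives. A secondary technical point is verifying that the supremum of the old (rescaled) norm with the affine family is genuinely equivalent to $\|\cdot\|$ — i.e. bounded above — which forces the affine functionals to be taken with norm bounded by a fixed constant; getting the scaling constants consistent so that the final inequality holds for \emph{every} $\alpha\in(-1,1)$ simultaneously (not just for $\alpha$ near $-1$) requires choosing the construction to produce, in effect, octahedral-type behaviour along the $\ell_1(\kappa)$ directions. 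Once the construction is set up correctly, the contradiction argument via Proposition~\ref{prop: equivalent conditions coverings} is short.
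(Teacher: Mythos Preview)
There is a genuine gap in your plan, and it sits exactly where you flagged the ``bookkeeping'' difficulty. The subspace $Y$ produced by the contradiction hypothesis is an arbitrary subspace of $X$ with $\dens(Y)\le\eta$; its elements are not elements of $\ell_1(\kappa)$ and have no notion of ``support in $\kappa$''. The sentence ``a $\le\eta$-sized dense subset of $Y$ touches only $<\kappa$ many coordinates $\gamma$'' simply does not parse unless you have a bounded projection $P\colon X\to\ell_1(\kappa)$, and your parenthetical ``norm-one complemented'' already concedes this. But $\ell_1(\kappa)$ need not be complemented in $X$ (think of $\ell_1$ inside $C[0,1]$), so this reduction is unavailable. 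The same problem infects the functional step: whatever Hahn--Banach extensions of the coordinate functionals you build into $\nn{\cdot}$, their values on a generic $y\in Y\subset X$ are uncontrolled, so there is no reason the estimate $\nn{\lambda x_{\gamma_0}+y}\ge -\alpha|\lambda|+\nn{y}$ should hold with $\nn{y}$ (rather than some unrelated quantity) on the right.

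The paper's proof avoids this by working with a \emph{seminorm} on all of $X$ rather than with functionals tied to $\ell_1(\kappa)$-coordinates. After renorming so that $Y:=\ell_1(\kappa)\hookrightarrow X$ isometrically, one takes $p:=\inf\mathcal P$ where $\mathcal P$ is the family of all seminorms on $X$ dominated by $\|\cdot\|$ and agreeing with $\|\cdot\|$ on $Y$. The crucial input (imported from \cite{BosenkoKadets2010}) is the limit identity
\[
\lim_{\eta} p(x+e_\eta)=p(x)+1\qquad\text{for every }x\in X,
\]
which holds for \emph{arbitrary} $x\in X$, not just for $x$ with ``small support''. This is what replaces your coordinate argument: given $A\subset X$ with $|A|<\operatorname{cf}(\kappa)$ and $\alpha\in(-1,1)$, one finds for each $a\in A$ an index beyond which $p(a-e_\eta)\ge p(a)-\alpha$, and cofinality then yields a single $e_\theta$ that works for all of $A$. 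The equivalent norm is then $\nn{x}:=p(x)+\|x+Y\|_{X/Y}$ (Lemma~\ref{lem: equivalent norm}); the quotient term is what upgrades the $p$-inequality to a $\nn{\cdot}$-inequality, since $\nn{e_\theta}=p(e_\theta)$ and $\nn{a}=p(a)+\|a+Y\|_{X/Y}$. So the missing idea in your proposal is precisely a device---here the infimal seminorm---that transports the $\ell_1(\kappa)$-behaviour to \emph{every} point of $X$ without assuming a projection.
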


The proof of Proposition~\ref{prop: renorm X to fail -alpha-BCP} makes use of the following lemma.

\begin{lemma}\label{lem: equivalent norm}
	Let $X$ be a normed space and $Y$ a closed subspace of $X$. If $p$ is a seminorm on $X$ dominated by the norm of $X$ and equivalent to it on $Y$, then 
	\[
	\nn{x}:= p(x) + \n{x+Y}_{X/Y}
	\]
	defines an equivalent norm on $X$. 
	
	Furthermore, if $\kappa$ is an infinite cardinal, $\alpha\in [-1,1)$ and for every set $A\subset X$ of cardinality $\kappa$ there exists $y\in Y\setminus\{0\}$ such that 
	\[p(y-a)\ge-\alpha p(y)+p(a)\quad \text{ for all $a\in A$,}
	\]
	then $(X,\nn{\cdot})$ fails the $\alpha$-BCP$_{\kappa}$.
	\end{lemma}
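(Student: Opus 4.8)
The plan is to verify the two assertions separately. For the first part, I would check that $\nn{\cdot}$ is indeed an equivalent norm. That it is a seminorm is immediate, since it is a sum of two seminorms (the quotient seminorm $\n{\cdot + Y}_{X/Y}$ on $X$ and $p$). For positive homogeneity and the triangle inequality there is nothing to do. The only real point is definiteness: if $\nn{x}=0$ then $\n{x+Y}_{X/Y}=0$, so $x\in Y$ (using that $Y$ is closed), and then $p(x)=0$ forces $x=0$ because $p$ is equivalent to $\n{\cdot}$ on $Y$. For the equivalence of norms, the upper bound $\nn{x}\le p(x)+\n{x}\le (1+c)\n{x}$ follows from $p$ being dominated by $\n{\cdot}$ (here $c$ is the domination constant, which can be taken to be $1$). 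For the lower bound one argues that $\n{\cdot}$ restricted to the finite-codimensional-in-spirit decomposition is controlled; concretely, given $x$, pick $y\in Y$ with $\n{x-y}\le \n{x+Y}_{X/Y}+\varepsilon$, and estimate $\n{x}\le \n{x-y}+\n{y}\lesssim \n{x+Y}_{X/Y}+p(y)\le \n{x+Y}_{X/Y}+p(x)+p(x-y)\lesssim \nn{x}$, using the equivalence of $p$ and $\n{\cdot}$ on $Y$ one more time and letting $\varepsilon\to 0$. I expect this to be routine bookkeeping with the two equivalence constants.

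For the second part I would use the characterization of the $\alpha$-BCP$_\kappa$ via Proposition~\ref{prop: equivalent conditions coverings}: it suffices to show that $S_{(X,\nn{\cdot})}\notin C(-\alpha,1,\kappa)$ for the new norm, equivalently that for every subspace of $\nn{\cdot}$-density at most $\kappa$ there is a unit vector far (in the sense of the proposition) from it. Given a set $A\subset X$ of cardinality $\kappa$ (it is enough to handle sets, by Remark~\ref{rem: properties of coverings}(a)), apply the hypothesis to get $y\in Y\setminus\{0\}$ with $p(y-a)\ge -\alpha p(y)+p(a)$ for all $a\in A$. The key observation is that for $y\in Y$ we have $\nn{y}=p(y)$, since $y+Y=0$ in $X/Y$; and more generally $\nn{x+y} \ge p(x+y)$ for any $x$. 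So after normalizing $y$ to a $\nn{\cdot}$-unit vector, for each $a\in A$,
\[
\nn{y-a}\ge p(y-a)\ge -\alpha p(y)+p(a)= -\alpha\nn{y}+p(a),
\]
and it remains to compare $p(a)$ with $\nn{a}$ in the direction needed — but actually one wants to conclude $\nn{y-a}\ge -\alpha\nn{y}+1$ for $a\in S_{(X,\nn{\cdot})}$, which does not follow from $p(a)$ alone. The honest move is instead to run the argument with $a$ ranging over an arbitrary $\kappa$-sized set and show $y\notin C(-\alpha,1,A)$ directly: we need $\nn{y-a}\ge -\alpha\nn{y}+\nn{a}$ to fail to hold for none — wait, we need it to hold, i.e. $y$ is not covered. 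Hmm.

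Let me restate the endgame more carefully. To show $(X,\nn{\cdot})$ fails the $\alpha$-BCP$_\kappa$, by Definition~\ref{def: alpha-BCP_kappa} and Proposition~\ref{prop: equivalent conditions coverings} it suffices to find, for each $A\subset X$ with $|A|=\kappa$, a point $z\in S_{(X,\nn{\cdot})}$ with $z\notin C(-\alpha,1,A)$ in the norm $\nn{\cdot}$, that is $\nn{z-a}\ge -\alpha\nn{z}+\nn{a}$ for all $a\in A$. Normalizing the $a$'s does not matter, so assume $A\subset S_{(X,\nn{\cdot})}$; then $\nn{a}=p(a)+\n{a+Y}_{X/Y}\le p(a)+1$, while from the hypothesis applied to the set $A$ we get $y\in Y\setminus\{0\}$ with $p(y-a)\ge -\alpha p(y)+p(a)$. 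Since $\nn{a-y}=\nn{a}$... no. The point I expect to be the \textbf{main obstacle} is exactly this: matching the quotient-norm contribution $\n{(z-a)+Y}_{X/Y}$ against $\n{a+Y}_{X/Y}$. This is handled because $z:=y/p(y)\in Y$, so $(z-a)+Y=-a+Y=-(a+Y)$ in $X/Y$, hence $\n{(z-a)+Y}_{X/Y}=\n{a+Y}_{X/Y}$; therefore
\[
\nn{z-a}=p(z-a)+\n{a+Y}_{X/Y}\ge\bigl(-\alpha p(z)+p(a)\bigr)+\n{a+Y}_{X/Y}=-\alpha\,\nn{z}+\nn{a},
\]
using $p(z)=p(z)+0=\nn{z}$ (as $z\in Y$) and $\nn{a}=p(a)+\n{a+Y}_{X/Y}$. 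Since $p(z)=1$ and $z\in Y$, $\nn{z}=1$, so $z\in S_{(X,\nn{\cdot})}$ and it lies outside $C(-\alpha,1,A)$. As $A$ was arbitrary of cardinality $\kappa$, Proposition~\ref{prop: equivalent conditions coverings} gives that $(X,\nn{\cdot})$ fails the $\alpha$-BCP$_\kappa$. The clean cancellation in the quotient is the crux; everything else is bookkeeping.
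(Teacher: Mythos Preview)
Your approach is the paper's: the first part is the same bookkeeping with the two constants, and for the furthermore part the crucial observation is exactly the one you isolate, namely that for $y\in Y$ one has $\n{(y-a)+Y}_{X/Y}=\n{a+Y}_{X/Y}$, so the quotient contribution on both sides cancels and the $p$-inequality lifts to a $\nn{\cdot}$-inequality.

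There is one genuine slip, though. You apply the hypothesis to $A$ to obtain $y\in Y\setminus\{0\}$ with $p(y-a)\ge -\alpha p(y)+p(a)$ for all $a\in A$, then set $z:=y/p(y)$ and write $p(z-a)\ge -\alpha p(z)+p(a)$. That inequality is \emph{not} what you have: rescaling $y$ to $z$ amounts to replacing $a$ by $p(y)\,a$ in the original estimate, and $p(y)\,a$ need not lie in $A$. The hypothesis was only invoked for the fixed set $A$, so you cannot trade it for its dilate after $y$ has been chosen.

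The fix is simply to drop the normalization, exactly as the paper does: using $y$ itself,
\[
\nn{y-a}=p(y-a)+\n{a+Y}_{X/Y}\ge -\alpha p(y)+p(a)+\n{a+Y}_{X/Y}=-\alpha\nn{y}+\nn{a},
\]
so $y\notin C(-\alpha,1,A)$ in $(X,\nn{\cdot})$. Hence $X\setminus\{0\}\notin C(-\alpha,1,\kappa)$, and Proposition~\ref{prop: equivalent conditions coverings} (the equivalence of (i) and (ii)) gives $S_{(X,\nn{\cdot})}\notin C(-\alpha,1,\kappa)$ without ever needing a unit vector. If you insist on producing $z\in S_{(X,\nn{\cdot})}$ directly, apply the hypothesis to $\mathbb{Q}^+A$ (still of cardinality $\kappa$) and pass to the closure, so that the inequality is available for all positive multiples of elements of $A$.
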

\begin{proof}
Assume $p \leq C \n{\cdot}$ on $X$ and $p \geq c \n \cdot$ on $Y$ for some $C,c > 0$.
On one hand $\nn{\cdot} \leq (C+1) \n \cdot$. On the other hand we claim that $\nn\cdot\ge\tilde{c}\|\cdot\|$, where $\tilde{c}:=c/(1+c+C)$. 

Fix $x\in X$. Without loss of generality we can assume that $\n{x+Y} < \tilde{c}\n x$, otherwise the claim is trivial. There is $y \in Y$
such that $\n{x-y} < \tilde{c} \n{x}$ and hence $\n y \geq \n x - \n{x-y} > (1-\tilde{c}) \n x$. Therefore,
$$\nn{x} \geq p(x) \geq p(y) - p(x-y) \geq c\n y - C\n{x-y} >$$
$$>c(1-\tilde{c})\n x - C\tilde{c}\n x=\tilde{c}\|x\|. $$
Thus the claim is proved. For the furthermore part fix a set $A\subset X$ of cardinality $\kappa$. There is $y\in Y\setminus\{0\}$ such that $p(y-a)\ge-\alpha p(y)+p(a)$ for all $a\in A$. Thus,
$$\nn{y-a}=p(y-a)+\|a+Y\|_{X/Y}\ge-\alpha p(y)+\nn a=-\alpha\nn y+\nn a.$$
\end{proof}

We are now ready to prove Proposition~\ref{prop: renorm X to fail -alpha-BCP}. 

\begin{proof}[Proof of Proposition~\ref{prop: renorm X to fail -alpha-BCP}]
	If $\ell_1(\kappa)\subset X$ isomorphically, then we can renorm $X$ such that $Y:=\ell_1(\kappa)\subset X$ isometrically. Let $\{e_{\eta}:\eta<\kappa\}\subset S_Y$ satisfy
	$$\|\sum_{j=1}^n \lambda_j e_{\eta_j}\| = \sum_{j=1}^n |\lambda_j|$$
    for all $n \in \bb N$, $\{\lambda_1, \dots, \lambda_n\} \subset \bb R$ and $\{\eta_1, \dots, \eta_n\} \subset \kappa$. Let $\mathcal{P}$ be the family of all seminorms on $X$ dominated by its norm such that they coincide with it on $Y$. Notice that $\mathcal{P}\not=\emptyset$ since $\|\cdot\|\in\mathcal{P}$. Let $p:=\inf\mathcal{P}$ and observe that $p\in\mathcal{P}$ is a well defined seminorm. Applying the claim in the proof of \cite[Theorem~1.3]{BosenkoKadets2010} to ultrafilters in $\kappa$, we obtain that
	$$\lim_{\eta}p(x+e_{\eta})=p(x)+1\text{ for all }x\in X.$$
	Now fix $\alpha\in (-1,1)$ and a set $A\subset X$ of cardinality strictly smaller then cf$(\kappa)$. For all $a\in A$ there exists $\eta_a<\kappa$ such that for all $\eta_a<\eta<\kappa$
	$$p(a+e_{\eta})\ge p(a)-\alpha.$$
	Since $|A|<\text{cf}(\kappa)$, there exists some $\theta<\kappa$ such that for all $a\in A$ we have that $\eta_a<\theta$ and of course
	$$p(a+e_\theta)\ge p(a)-\alpha.$$
	Finally, we apply Lemma~\ref{lem: equivalent norm} to obtain that $(X,\nn{\cdot})$ fails the $\alpha$-BCP$_{\eta}$ whenever $\eta<\text{cf}(\kappa)$.
\end{proof}

\section{Ball-covering properties in direct sums}\label{sec: direct sums}

In \cite[Proposition~8]{Guirao2019} the authors investigated the stability of the $\alpha$-BCP in $\ell_p$-sums of normed spaces. They proved that if $X$ and $Y$ have the $\alpha$-BCP, then so does $X\oplus_p Y$ for any $1\leq p\leq \infty$. An inspection of their proof yields that a similar statement also holds for the $\alpha$-BCP$_\kappa$. Our first aim in this section is to investigate the converses to the aforementioned results and for that we will use the notion of an absolute sum.

Recall that a norm $N$ on $\mathbb R^2$ is called \emph{absolute} (see \cite{Bonsall1973}) if
\[
N(a,b)=N(|a|,|b|)\qquad\text{for all $(a,b)\in\mathbb R^2$}
\]
and \emph{normalized} if $N(1,0)=N(0,1)=1$.

For example, the $\ell_p$-norm on $\mathbb{R}^2$ is absolute and normalized for every $p\in[1,\infty]$. If $N$ is an absolute norm on $\mathbb R^2$, then $N(a,b)\leq N(c,d)$ whenever $|a|\leq |c|$ and $|b|\leq |d|$.

If $X$ and $Y$ are normed spaces and $N$ is an absolute normalized norm on $\mathbb R^2$, then we denote by $X\oplus_N Y$ the product space $X\times Y$ endowed with the norm $\|(x,y)\|_N:=N(\|x\|,\|y\|)$.

\begin{proposition}\label{prop: absolute sum}
	Let $X$ and $Y$ be normed spaces, $N$ an absolute normalized norm on $\mathbb{R}^2$, $\alpha\in[-1,0]$ and $\beta\in[-1,1]$. If $S_{X\oplus_N Y}\subset C(\alpha,\beta,A)$ for some set $A\subset X\oplus_NY$, then $S_X\subset C(\alpha,\beta,\pi_X(A))$, where $\pi_X$ is the projection from $X\oplus_N Y$ onto $X$. Moreover, if $N$ is the $\ell_1$ norm on $\mathbb{R}^2$, then the statement holds also for $\alpha\in(0,1)$.
\end{proposition}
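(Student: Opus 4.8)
The statement says: covering the sphere of $X \oplus_N Y$ by translates-with-scaled-radius sets coming from a set $A$ yields a covering of $S_X$ by the projected set $\pi_X(A)$. So I would start with a point $x \in S_X$ and try to produce a witness $a \in A$ with $\|x' - \pi_X(a)\| < \alpha\|x'\| + \beta\|\pi_X(a)\|$ for a suitably chosen point $x'$ on $S_{X \oplus_N Y}$ lying "above" $x$, and then transfer the inequality down to $X$. The natural choice is $x' = (x, 0)$, which lies in $S_{X \oplus_N Y}$ because $N$ is normalized: $\|(x,0)\|_N = N(1,0) = 1$. By hypothesis there is $a = (u,v) \in A$ with
\[
\|(x,0) - (u,v)\|_N = N(\|x - u\|, \|v\|) < \alpha \cdot 1 + \beta \cdot N(\|u\|,\|v\|).
\]
Now I want to conclude $\|x - u\| < \alpha\|x\| + \beta\|u\|$, i.e. the same inequality with $\|x\|=1$ and with the $Y$-components deleted. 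Since $N$ is absolute and normalized, $N(\|x-u\|,\|v\|) \geq N(\|x-u\|,0) = \|x-u\|$, which handles the left-hand side in the right direction. The remaining issue is the right-hand side: I need $\alpha + \beta N(\|u\|,\|v\|) \leq \alpha + \beta\|u\|$, i.e. $\beta N(\|u\|,\|v\|) \leq \beta\|u\|$.

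**Where the sign conditions enter.** This is exactly the place the hypotheses $\alpha \in [-1,0]$ and $\beta \in [-1,1]$ are used, and it is the main (and only real) obstacle. If $\beta \geq 0$, then $N(\|u\|,\|v\|) \geq \|u\|$ (monotonicity of absolute norms) gives the wrong direction unless $v = 0$ — so a cruder bound is needed. The fix: since $\alpha \leq 0$, we have $\alpha \leq \alpha\|x\|$ trivially isn't what's wanted either... let me instead bound more carefully. We have $\|x - u\| \leq N(\|x-u\|,\|v\|) < \alpha + \beta N(\|u\|,\|v\|)$. Note first that this already forces $\alpha + \beta N(\|u\|,\|v\|) > 0$. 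If $\beta \leq 0$: then $N(\|u\|,\|v\|) \geq \|u\|$ gives $\beta N(\|u\|,\|v\|) \leq \beta\|u\|$, hence $\|x-u\| < \alpha + \beta\|u\| = \alpha\|x\| + \beta\|u\|$ and we are done. If $\beta > 0$ (still with $\alpha \leq 0$): here I would argue that $N(\|u\|,\|v\|) \leq \|u\| + \|v\|$ is not small enough; instead observe $\beta N(\|u\|,\|v\|) \leq \beta(\|u\| + \text{something})$ is the wrong track. The correct observation for general $N$ and $\beta > 0$ requires the special structure — and indeed the proposition only claims the $\beta \in [-1,1]$ case for $\alpha \in [-1,0]$, and I should check whether $\beta > 0$ is actually covered: re-reading, yes, $\beta \in [-1,1]$ is allowed with $\alpha \in [-1,0]$. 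So for $\beta > 0$ I use: $N(\|u\|,\|v\|) \geq \|u\|$ is useless, but note that since $\alpha \le 0$, $\alpha + \beta N(\|u\|,\|v\|) > \|x-u\| \ge 0$ forces $N(\|u\|,\|v\|) > 0$, and then I bound using that $\alpha+\beta N(\|u\|,\|v\|) \le \alpha + \beta\|(u,v)\|_N$ — wait, $N(\|u\|,\|v\|) = \|(u,v)\|_N$ by definition. The point I actually need is $\|(u,v)\|_N \ge \|u\|$, which holds since $\|u\| = \|\pi_X\|\cdot$ — no: $\|u\| = N(\|u\|,0) \le N(\|u\|,\|v\|) = \|(u,v)\|_N$. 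That again gives the wrong direction for $\beta>0$. I therefore expect that for $\beta>0$ one must use $N(\|u\|,\|v\|)\le \|u\|+\|v\|$ together with a secondary estimate, OR the intended reading pairs the "moreover" ($\ell_1$, $\beta>0$ region near $\alpha \in (0,1)$) with the genuinely extra case — I would resolve this by treating $\alpha \le 0, \beta \le 0$ (the BCP-relevant case, $\beta=1$ excluded? no, $\beta=1>0$...). I will reexamine and, if $\beta=1$ must be included, replace the naive bound with: pick instead $x' = (x, 0)$ and use that $\|(x,0)-(u,v)\|_N \ge \max(\|x-u\|, \|v\|)$, and split on whether $\|v\|$ is large.

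**The $\ell_1$ refinement.** For the "moreover" clause with $N = \|\cdot\|_1$ and $\alpha \in (0,1)$, the above monotonicity trick $N(\|x-u\|,\|v\|) \ge \|x-u\|$ still works on the left, and on the right $\alpha \cdot 1 = \alpha\|x\|$ is fine, but now $\beta N(\|u\|,\|v\|) = \beta(\|u\| + \|v\|) \ge \beta\|u\|$ for $\beta > 0$ is again the wrong direction — so here the key is different: from $\|x - u\| + \|v\| = \|(x,0)-(u,v)\|_1 < \alpha + \beta(\|u\| + \|v\|)$ one gets $\|x-u\| < \alpha + \beta\|u\| + (\beta - 1)\|v\| \le \alpha + \beta\|u\|$ precisely because $\beta \le 1$, so $(\beta-1)\|v\| \le 0$. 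This is clean and uses only $\beta \le 1$, no sign constraint on $\alpha$ or $\beta$ beyond $\beta \le 1$ — which is why the $\ell_1$ case extends to $\alpha \in (0,1)$. I would write the proof in this order: (1) reduce to $x' = (x,0) \in S_{X\oplus_N Y}$; (2) extract the witness $(u,v) \in A$; (3) for general absolute $N$, use left-monotonicity $N(\|x-u\|,\|v\|)\ge\|x-u\|$ and the sign of $\beta$ (resp. a case split) to drop the $Y$-components on the right; (4) for $N = \|\cdot\|_1$, use the identity $\|(a,b)\|_1 = a+b$ and $\beta \le 1$ to absorb $\|v\|$, removing any restriction on $\alpha$. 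The only delicate bookkeeping is matching which sign hypotheses are genuinely needed in step (3) versus inherited from the $\alpha$-BCP context; the geometric content is entirely the two monotonicity facts about absolute normalized norms stated just before the proposition.
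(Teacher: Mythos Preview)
Your setup is the same as the paper's: take $x\in S_X$, lift to $(x,0)\in S_{X\oplus_N Y}$, and extract a witness $(u,v)\in A$. Your treatment of the $\ell_1$ ``moreover'' clause is correct and essentially identical to the paper's. The gap is in the general case when $\beta>0$ (in particular $\beta=1$, which is precisely the case feeding into Corollary~\ref{cor: absolute sum}). You bound the left side by $N(\|x-u\|,\|v\|)\ge\|x-u\|$ and then need $\alpha+\beta N(\|u\|,\|v\|)\le\alpha+\beta\|u\|$, which is simply false for $\beta>0$ and $v\neq 0$. You recognise this yourself (``wrong direction'', ``useless'') but never resolve it; the suggestion to ``split on whether $\|v\|$ is large'' is not carried out and would not obviously close the gap.

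The paper avoids this by \emph{not} discarding $\|v\|$ on the left. It argues by contradiction: assume $\|x-u\|\ge\alpha+\beta\|u\|$ and use monotonicity in the first coordinate to get
\[
N(\|x-u\|,\|v\|)\ge N(\alpha+\beta\|u\|,\|v\|)\ge -N(\alpha,0)+N(\beta\|u\|,\|v\|)=\alpha+N(\beta\|u\|,\|v\|),
\]
the last equality using $\alpha\le 0$. The decisive inequality is then $N(\beta\|u\|,\|v\|)\ge\beta N(\|u\|,\|v\|)$, which holds for every $\beta\in[-1,1]$: for $\beta\in[0,1]$ because $\beta N(\|u\|,\|v\|)=N(\beta\|u\|,\beta\|v\|)\le N(\beta\|u\|,\|v\|)$, and trivially for $\beta<0$. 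Combining gives $N(\|x-u\|,\|v\|)\ge\alpha+\beta N(\|u\|,\|v\|)$, contradicting the covering inequality. The point you were missing is precisely this homogeneity-plus-monotonicity estimate, which lets the second coordinate stay in the picture until the very end rather than being thrown away at the outset.
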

\begin{proof}
	Fix $x\in S_X$ and consider the element $(x,0)\in S_{X\oplus_NY}$. By assumption there is $(a_X,a_Y)\in A$ such that
	$$N(\|x-a_X\|,\|a_Y\|)<\alpha+\beta N(\|a_X\|,\|a_Y\|).$$
	Assume by contradiction that $\|x-a_X\|\ge\alpha+\beta\|a_X\|$. Therefore,
	$$\alpha+\beta N(\|a_X\|,\|a_Y\|)>N(\|x-a_X\|,\|a_Y\|)\ge N(\alpha+\beta\|a_X\|,\|a_Y\|)\ge$$
	$$\ge -N(\alpha,0)+N(\beta\|a_X\|,\|a_Y\|)=-|\alpha|+N(\beta\|a_X\|,\|a_Y\|).$$
	Since $\alpha\in [-1,0]$ we have that $-|\alpha|=\alpha$. In addition notice that $$N(\beta\|a_X\|,\|a_Y\|)\ge\beta N(\|a_X\|,\|a_Y\|),$$
	which leads to a contradiction.
	
	For the moreover part, if $N$ is the $\ell_1$ norm on $\mathbb{R}^2$, we only need to adjust the previous inequality to
	$$\alpha+\beta(\|a_X\|+\|a_Y\|)>\|x-a_X\|+\|a_Y\|\ge\alpha+\beta\|a_X\|+\|a_Y\|$$
	to get a contradiction.
\end{proof}

\begin{corollary}\label{cor: absolute sum}
	Let $X$ and $Y$ be normed spaces, $N$ an absolute normalized norm in $\mathbb{R}^2$, $\kappa$ an infinite cardinal and $\alpha\in[0,1)$. If $X\oplus_N Y$ has the $\alpha$-BCP$_{\kappa}$, then $X$ and $Y$ have the $\alpha$-BCP$_{\kappa}$.
\end{corollary}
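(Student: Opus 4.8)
The plan is to deduce the corollary directly from Proposition~\ref{prop: absolute sum} by unwinding the definition of the $\alpha$-BCP$_\kappa$. First I would recall that, by Definition~\ref{def: alpha-BCP_kappa}, saying that $X\oplus_N Y$ has the $\alpha$-BCP$_\kappa$ means precisely that $S_{X\oplus_N Y}\in C(-\alpha,1,\kappa)$, i.e.\ there is a set $A\subset X\oplus_N Y$ of cardinality $\kappa$ with $S_{X\oplus_N Y}\subset C(-\alpha,1,A)$. Since $\alpha\in[0,1)$ we have $-\alpha\in(-1,0]$, so the parameter $-\alpha$ falls in the range $[-1,0]$ covered by the first part of Proposition~\ref{prop: absolute sum} (taking $\beta=1\in[-1,1]$ there).

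Next I would apply Proposition~\ref{prop: absolute sum} with the pair of parameters $(-\alpha,1)$: it yields $S_X\subset C(-\alpha,1,\pi_X(A))$, where $\pi_X\colon X\oplus_N Y\to X$ is the coordinate projection. Since $|\pi_X(A)|\le|A|=\kappa$, and since having the $\alpha$-BCP$_\kappa$ only requires a covering set of cardinality \emph{at most} $\kappa$ (one can always pad a smaller set with repeated points, or equivalently $C(-\alpha,1,\kappa)$ is understood as "cardinality $\le\kappa$" — this is implicit in the paper's usage, e.g.\ in the Zorn's-lemma argument of Proposition~\ref{prop: X contains ell_1}), we conclude $S_X\in C(-\alpha,1,\kappa)$, that is, $X$ has the $\alpha$-BCP$_\kappa$. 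The argument for $Y$ is identical, using the projection $\pi_Y$ onto $Y$ — note that $X\oplus_N Y$ is isometrically $Y\oplus_{N'} X$ for the absolute normalized norm $N'(a,b):=N(b,a)$, so Proposition~\ref{prop: absolute sum} applies verbatim with the roles of $X$ and $Y$ swapped.

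There is no real obstacle here; the corollary is a formal consequence of the proposition. The only point requiring a moment's care is the bookkeeping on cardinalities: one must observe that $\pi_X(A)$ could a priori have cardinality strictly less than $\kappa$ (for instance if all elements of $A$ share the same first coordinate), but this is harmless since the $\alpha$-BCP$_\kappa$ is a "$\le\kappa$" condition. If one prefers to be pedantic, when $|\pi_X(A)|<\kappa$ one simply enlarges $\pi_X(A)$ to a set of cardinality exactly $\kappa$ by adjoining arbitrary extra points of $X$; enlarging the index set only enlarges $C(-\alpha,1,\cdot)$, so the covering of $S_X$ is preserved.
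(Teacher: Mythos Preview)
Your proposal is correct and is exactly the intended derivation: the paper states the corollary without proof, as an immediate consequence of Proposition~\ref{prop: absolute sum} applied with parameters $(-\alpha,1)$. Your handling of the cardinality bookkeeping and the symmetry between $X$ and $Y$ is accurate and needs no further comment.
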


We now turn our attention to the behaviour of the $\alpha$-BCP$_{\kappa}$ in infinite direct sums of normed spaces. Firstly, recall that for a sequence $(X_n)$ of normed spaces and $1\leq p\leq\infty$, the normed space $\ell_p(X_n)$ has the BCP if and only if each $X_n$ has the BCP (see \cite[Corollary~2.7 and Theorem~2.8]{Luo2020}). Our goal in the remaining part of this section is to investigate the $\alpha$-BCP$_{\kappa}$ in infinite $\ell_1$ and $\ell_\infty$-sums in more detail.

\begin{proposition}\label{prop: infty sums}
		Let $(X_\eta)$ be a family of normed spaces and for each $\eta$ let $A_\eta$ be a subset of $X_\eta$. For every $\theta$ consider the map $\tilde{(\cdot)}:X_\theta\rightarrow\ell_{\infty}(X_\eta)$ defined by $\tilde{x}=(\ldots,0,x,0,\ldots)$ for all $x\in X_\theta$.
	\begin{itemize}
		\item[$(a)$] If $\alpha\in [0,1]$ and $S_{X_\eta}\subset C(\alpha,1,A_\eta)$ for all $\eta$, then $S_{\ell_{\infty}(X_\eta)}\subset C(\alpha,1,\mathbb{R}^+\bigcup_\eta\tilde{A_\eta})$.
		\item[$(b)$] If $\alpha\in (-1,0)$ and $S_{X_\eta}\subset C(\alpha,1,A_\eta)$ for all $\eta$, then $S_{\ell_{\infty}(X_\eta)}\subset C(\alpha+\varepsilon,1,\mathbb{R}^+\bigcup_\eta\tilde{A}_\eta)$ for every $\varepsilon>0$.
	\end{itemize}
\end{proposition}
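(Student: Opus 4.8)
The plan is to fix an element $u = (u_\eta)_\eta \in S_{\ell_\infty(X_\eta)}$ and to produce, for each part, a suitable element of $\mathbb{R}^+\bigcup_\eta \tilde{A}_\eta$ witnessing membership in the relevant set $C(\cdot,1,\cdot)$. Since $\|u\|_\infty = 1$, we can pick a coordinate $\theta$ with $\|u_\theta\|$ as close to $1$ as we wish; the whole argument then takes place essentially in the single summand $X_\theta$, using the covering $S_{X_\theta}\subset C(\alpha,1,A_\theta)$, and the remaining coordinates of $u$ only contribute harmlessly because we are working with the sup norm. Concretely, normalize $v := u_\theta/\|u_\theta\| \in S_{X_\theta}$ and, by hypothesis, find $a \in A_\theta$ with $\|v - a\| < \alpha + \|a\|$; then I would take as the approximating center the scaled vector $t\,\tilde{a} \in \mathbb{R}^+\bigcup_\eta \tilde A_\eta$ for an appropriate $t>0$ (morally $t = \|u_\theta\|$, so that $t a$ is close to $u_\theta$), and estimate $\|u - t\tilde{a}\|_\infty = \max\{\|u_\theta - ta\|,\ \sup_{\eta\neq\theta}\|u_\eta\|\}$.

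For part $(a)$, where $\alpha \in [0,1]$: here I can simply take $\theta$ with $\|u_\theta\| = 1$ if the supremum is attained, or approximate. Using Remark~\ref{rem: properties of coverings}(b),(c),(d) one reduces to showing $u \in C(\alpha,1,\mathbb{R}^+\tilde A_\theta)$: scaling $v-a$ by $\|u_\theta\|$ and noting that $\|a\|$ can only be enlarged by the factor $\|u_\theta\|^{-1}\geq 1$ (which is allowed in the second slot by Remark~\ref{rem: properties of coverings}(d) since $\beta=1$), while the off-coordinates satisfy $\|u_\eta\| \leq \|u_\theta\| \leq \|ta\| + \|u_\theta-ta\|$; the key point is that with $\alpha\geq 0$ and $\beta=1$ the ball $B(ta, \alpha + \|ta\|)$ is large enough to swallow the full sup norm. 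I expect a short computation comparing $\max\{\|u_\theta-ta\|, \sup_{\eta\neq\theta}\|u_\eta\|\}$ against $\alpha + t\|a\|$ to close this, using $\alpha\geq 0$ crucially to absorb the other coordinates.

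For part $(b)$, where $\alpha \in (-1,0)$: the difficulty is that the ball radius $\alpha + \|a\|$ may now be \emph{smaller} than $\|a\|$, so the crude bound $\sup_{\eta\neq\theta}\|u_\eta\| \leq \|u_\theta\|$ is no longer automatically dominated, and this is the main obstacle. The fix, which explains the loss of $\varepsilon$, is to choose $\theta$ so that $\|u_\theta\|$ is within $\varepsilon'$ of $1$ (so all the other coordinates have norm at most $\|u_\theta\|$, hence are controlled to within $\varepsilon'$ of the radius budget), and then to enlarge the admissible slack from $\alpha$ to $\alpha+\varepsilon$ to absorb this $\varepsilon'$-error; one picks $\varepsilon'$ small in terms of $\varepsilon$ and of a lower bound on $\|a\|$ (which is bounded below away from $0$ once $\alpha<0$, since $\|v-a\|<\alpha+\|a\|$ forces $\|a\| > -\alpha + \|v-a\| \geq -\alpha > 0$, giving the uniform lower bound $\|a\| > -\alpha$). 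With that, the estimate $\|u - t\tilde a\|_\infty \leq \max\{\|u_\theta - ta\|,\ \|u_\theta\|\} < (\alpha+\varepsilon) + t\|a\|$ should go through after collecting the error terms, completing the proof.
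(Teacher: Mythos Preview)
Your overall strategy---pick a single coordinate $\theta$, use the covering in $X_\theta$, then transfer to $\ell_\infty(X_\eta)$ via a scaled center $t\tilde a$---is the paper's, but the choice $t\approx\|u_\theta\|$ does not control the off-coordinates, and this is where the argument breaks.

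Take $\alpha=0$ in part~(a). With your $t=\|u_\theta\|$ and $a\in A_\theta$ satisfying $\|v-a\|<\|a\|$, you only get $\|u_\theta-ta\|<\|ta\|=t\|a\|$, but $\|a\|$ may be as small as $(1-\alpha)/2=1/2$, so $t\|a\|$ can be below $1$ while $\sup_{\eta\neq\theta}\|u_\eta\|$ can equal $1$. The ``enlarging by $\|u_\theta\|^{-1}$'' that you invoke from Remark~\ref{rem: properties of coverings}(d) merely brings you back to the center $a$ itself, with the same defect. In part~(b) the problem is worse: your uniform lower bound $\|a\|>-\alpha$ only yields $(\alpha+\varepsilon)+t\|a\|\approx\varepsilon$, which is far from exceeding $\sup_{\eta\neq\theta}\|u_\eta\|\leq 1$; so the displayed estimate $\max\{\|u_\theta-ta\|,\,\sup_{\eta\neq\theta}\|u_\eta\|\}<(\alpha+\varepsilon)+t\|a\|$ simply fails for small $\|a\|$.

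The missing idea is to use Remark~\ref{rem: properties of coverings}(d) \emph{aggressively}: once you have $\|u_\theta-a\|<\alpha\|u_\theta\|+\|a\|$ for some $a\in\mathbb{R}^+A_\theta$, replace $a$ by $ta$ with $t\ge\max\{2/\|a\|,1\}$, so that $\|ta\|\ge 2$. Then $\|u_\theta-ta\|\ge\|ta\|-\|u_\theta\|\ge 1\ge\|u_\eta\|$ for every $\eta$, which forces $\|u-t\tilde a\|_\infty=\|u_\theta-ta\|<\alpha\|u_\theta\|+\|t\tilde a\|_\infty$, and the off-coordinates never enter. Now both parts finish in one line: for (a) any $\theta$ with $u_\theta\neq 0$ works, since $\alpha\|u_\theta\|\le\alpha$; for (b) choose $\theta$ with $\|u_\theta\|\ge 1+\varepsilon/\alpha$, so that $\alpha\|u_\theta\|\le\alpha+\varepsilon$. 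There is no need to chase the supremum or worry about whether it is attained.
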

\begin{proof}
	Assume that $S_{X_\eta}\subset C(\alpha,1,A_\eta)$ for all $\eta$. Then, by Remark~\ref{rem: properties of coverings}(c), one has that $X_\eta\setminus\{0\}=C(\alpha,1,\mathbb{R}^+A_\eta)$. Fix $x\in S_{\ell_{\infty}(X_\eta)}$.
	\begin{itemize}
		\item[$(a)$] If $\alpha\in[0,1]$, then find some $\eta$ such that $\|x(\eta)\|>0$.
		\item[$(b)$] If $\alpha\in(-1,0)$, then fix $\varepsilon\in (0,-\alpha)$ and find $\eta$ such that $\|x(\eta)\|\ge 1+\varepsilon/\alpha$.
	\end{itemize}
	By our assumption there is some $a\in \mathbb{R}^+A_\eta\setminus\{0\}$ such that $\|x(\eta)-a\|<\alpha\|x(\eta)\|+\|a\|$. Hence, 
	$$\|x(\eta)-ta\|<\alpha\|x(\eta)\|+\|ta\|$$
	for any $t\ge\max\{2/\|a\|,1\}$ by Remark~\ref{rem: properties of coverings}(d). Notice that with this choice of $t$ we have that $\|ta\|\ge2$, therefore $\|x-t\tilde{a}\|_{\infty}=\|x(\eta)-ta\|$. We deduce that
	$$\|x-t\tilde{a}\|_{\infty}=\|x(\eta)-ta\|<\alpha\|x(\eta)\|+\|ta\|=\alpha\|x(\eta)\|+\|t\tilde{a}\|_{\infty}.$$
	\begin{itemize}
		\item[$(a)$]If $\alpha\in[0,1]$, then $\alpha\|x(\eta)\|\le \alpha$. This proves that $S_{\ell_{\infty}(X_\eta)}\subset C(\alpha,1,\mathbb{R}^+\bigcup\tilde{A}_\eta)$
		\item[$(b)$] If $\alpha\in (-1,0)$, then $\alpha\|x(\eta)\|\le\alpha(1+\varepsilon/\alpha)=\alpha+\varepsilon$. This shows that $S_{\ell_{\infty}(X_\eta)}\subset C(\alpha+\varepsilon,1,\mathbb{R}^+\bigcup\tilde{A}_\eta)$.
	\end{itemize}
\end{proof}

\begin{corollary}\label{cor: infinity sum BCP}
	Let $\kappa$ be an infinite cardinal and $\{X_\eta:\eta<\kappa\}$ a family of normed spaces.
	\begin{itemize}
	    \item[$(a)$] If $\alpha\in[-1,0]$ and each $X_\eta$ satisfies the $\alpha$-BCP$_\kappa$, then $\ell_{\infty}(X_\eta)$ has the $\alpha$-BCP$_{\kappa}$.
	    \item[$(b)$] If $\alpha\in(0,1)$ and each $X_\eta$ satisfies the $\alpha$-BCP$_\kappa$, then, for every $\varepsilon>0$, $\ell_{\infty}(X_\eta)$ has the $(\alpha-\varepsilon)$-BCP$_{\kappa}$.
	\end{itemize}
\end{corollary}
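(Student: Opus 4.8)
The plan is to combine Proposition~\ref{prop: infty sums} with a cardinality reduction: the geometric content of the corollary is already contained in that proposition, and the extra work is to make the covering set small enough. Assume each $X_\eta$ has the $\alpha$-BCP$_\kappa$ and fix, for every $\eta<\kappa$, a set $A_\eta\subset X_\eta$ with $|A_\eta|\le\kappa$ and $S_{X_\eta}\subset C(-\alpha,1,A_\eta)$. In case $(a)$ one has $-\alpha\in[0,1]$, so Proposition~\ref{prop: infty sums}$(a)$ (applied with $-\alpha$ in the role of its first parameter) gives $S_{\ell_\infty(X_\eta)}\subset C(-\alpha,1,\mathbb R^+\bigcup_\eta\tilde A_\eta)$; in case $(b)$ one has $-\alpha\in(-1,0)$, so Proposition~\ref{prop: infty sums}$(b)$ gives, for every $\varepsilon>0$, $S_{\ell_\infty(X_\eta)}\subset C(-\alpha+\varepsilon,1,\mathbb R^+\bigcup_\eta\tilde A_\eta)$. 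The obstacle is that $\mathbb R^+\bigcup_\eta\tilde A_\eta$ may have cardinality strictly larger than $\kappa$ (for instance $2^{\aleph_0}$ when $\kappa=\omega$), so the $\alpha$-BCP$_\kappa$ does not follow at once.

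To get around this I would re-examine the proof of Proposition~\ref{prop: infty sums} and check that each point of $S_{\ell_\infty(X_\eta)}$ can in fact be handled using an \emph{integer} dilation of one of the vectors $\tilde a_0$, $a_0\in\bigcup_\eta A_\eta$. Given $x\in S_{\ell_\infty(X_\eta)}$, choose a coordinate $\eta$ with $\|x(\eta)\|>0$, taken moreover large enough that $-\alpha\,\|x(\eta)\|\le-\alpha+\varepsilon$ in case $(b)$ (possible since $\|x\|_\infty=1$; in case $(a)$ any such $\eta$ works, as $-\alpha\ge 0$). Put $u:=x(\eta)/\|x(\eta)\|\in S_{X_\eta}$ and choose $a_0\in A_\eta$ with $\|u-a_0\|<-\alpha+\|a_0\|$; note $a_0\ne 0$. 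Let $n:=\max\{1,\lceil 2/\|a_0\|\rceil\}\in\mathbb N$, which depends only on $a_0$. From the identity $x(\eta)-na_0=\|x(\eta)\|(u-a_0)+(\|x(\eta)\|-n)a_0$, the triangle inequality, the bound $\|u-a_0\|<-\alpha+\|a_0\|$, and $\|x(\eta)\|\le 1\le n$, one gets
$$\|x(\eta)-na_0\|\le\|x(\eta)\|\,\|u-a_0\|+(n-\|x(\eta)\|)\,\|a_0\|<-\alpha\,\|x(\eta)\|+n\|a_0\|;$$
since $n\|a_0\|\ge 2$, the $\ell_\infty$-norm of $x-n\tilde a_0$ is attained at the coordinate $\eta$, so $\|x-n\tilde a_0\|_\infty=\|x(\eta)-na_0\|$. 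Bounding $-\alpha\,\|x(\eta)\|$ by $-\alpha$ in case $(a)$ and by $-\alpha+\varepsilon$ in case $(b)$, we conclude that $n\tilde a_0$ witnesses $x\in C(-\alpha,1,\{n\tilde a_0\})$, respectively $x\in C(-\alpha+\varepsilon,1,\{n\tilde a_0\})$.

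Consequently $S_{\ell_\infty(X_\eta)}\subset C(-\alpha,1,A')$ in case $(a)$ and $S_{\ell_\infty(X_\eta)}\subset C(-\alpha+\varepsilon,1,A')$ in case $(b)$, where $A':=\{\max\{1,\lceil 2/\|a_0\|\rceil\}\,\tilde a_0:\eta<\kappa,\ a_0\in A_\eta,\ a_0\ne 0\}$. Since there are $\kappa$ indices $\eta$ and $|A_\eta|\le\kappa$ for each, $|A'|\le\kappa\cdot\kappa=\kappa$, which is precisely the assertion that $\ell_\infty(X_\eta)$ has the $\alpha$-BCP$_\kappa$ in case $(a)$ and the $(\alpha-\varepsilon)$-BCP$_\kappa$ in case $(b)$. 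The only real difficulty in this argument is the cardinality reduction just carried out; the displayed inequality is essentially the estimate already present in the proof of Proposition~\ref{prop: infty sums}, merely reorganised so that the dilation factor is a natural number depending only on $a_0$ rather than an arbitrary positive real.
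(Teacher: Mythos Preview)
Your proof is correct and follows the paper's intended line: apply Proposition~\ref{prop: infty sums} and then shrink the covering set to cardinality $\kappa$. The paper states the corollary without proof; the cardinality reduction it has in mind is simply Remark~\ref{rem: properties of coverings}(a): since $\mathbb{R}^+\bigcup_\eta\tilde A_\eta\subset\overline{\mathbb{Q}^+\bigcup_\eta\tilde A_\eta}$ and $\big|\mathbb{Q}^+\bigcup_\eta\tilde A_\eta\big|\le\aleph_0\cdot\kappa\cdot\kappa=\kappa$, one gets $C(-\alpha,1,\mathbb{R}^+\bigcup_\eta\tilde A_\eta)\subset C(-\alpha,1,\mathbb{Q}^+\bigcup_\eta\tilde A_\eta)$ directly, and the conclusion follows in one line. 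Your integer-dilation device achieves the same goal and is perfectly valid, but it forces you to reopen and redo the estimate in the proof of Proposition~\ref{prop: infty sums}; using the closure trick keeps the corollary a genuine corollary.
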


Notice that the converse of Corollary~\ref{cor: infinity sum BCP}, for $\alpha\in [0,1)$, is given by Corollary~\ref{cor: absolute sum}.

\begin{proposition}\label{prop: ell_1 sum}
Let $(X_n)$ be a sequence of normed spaces, for each $n$ let $A_n$ be a subset of $X_n$ and let $\beta\in [-1,1]$.
	\begin{itemize}
		\item[$(a)$] Let $\alpha\in [0,1]$. If $X_n\setminus\{0\}=C(\alpha,\beta,A_n)$ for all $n\in\mathbb{N}$, then $\ell_1(X_n)\setminus\{0\}=C(\alpha+\varepsilon,\beta,c_{00}(A_n\cup\{0\}))$ for all $\varepsilon>0$.
		\item[$(b)$] Let $\alpha\in [-1,1]$. If each $A_n$ is a subspace and there exists some $c>0$ such that $S_{X_n}\subset C(\alpha,\beta,cB_{A_n})$ for all $n\in\mathbb{N}$, then $S_{\ell_1(X_n)}\subset C(\alpha,\beta,cB_{\ell_1(A_n)})$.
	\end{itemize}
\end{proposition}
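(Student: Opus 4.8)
The plan is to build, in both parts, the approximating element of the $\ell_1$-sum coordinatewise out of the witnesses that the hypotheses provide inside each $X_n$, and then to recover the defining inequality of $C(\cdot,\cdot,\cdot)$ by summing the coordinate estimates, using that $\|\cdot\|_1$ is additive over coordinates. The reverse inclusion in $(a)$ and the descent from $X\setminus\{0\}$ to the sphere are routine, so the substance is the construction together with the bookkeeping of the error terms.

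For part $(a)$, I would fix a nonzero $x=(x_n)\in\ell_1(X_n)$, write $M:=\|x\|_1>0$, and, given $\varepsilon>0$, choose $N\in\mathbb{N}$ so large that $\sum_{n>N}\|x_n\|<\varepsilon M$ and $x_m\neq0$ for some $m\leq N$. For each $n\leq N$ with $x_n\neq0$ the hypothesis $X_n\setminus\{0\}=C(\alpha,\beta,A_n)$ supplies $a_n\in A_n$ with $\|x_n-a_n\|<\alpha\|x_n\|+\beta\|a_n\|$; for every other $n$ I set $a_n:=0$, so that $a:=(a_n)\in c_{00}(A_n\cup\{0\})$. Since $a_n=0$ for $n>N$, this yields
\begin{align*}
\|x-a\|_1 &=\sum_{n\leq N}\|x_n-a_n\|+\sum_{n>N}\|x_n\|\\
&<\alpha\sum_{n\leq N}\|x_n\|+\beta\|a\|_1+\varepsilon M\\
&\leq(\alpha+\varepsilon)\|x\|_1+\beta\|a\|_1,
\end{align*}
the strict step coming from the finitely many coordinates $n\leq N$ with $x_n\neq0$ (at least $n=m$), and the last step using $\alpha\geq0$ to pass from $\alpha\sum_{n\leq N}\|x_n\|$ to $\alpha\|x\|_1$. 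Hence $x\in C(\alpha+\varepsilon,\beta,c_{00}(A_n\cup\{0\}))$. For the opposite inclusion, $0\notin C(\alpha+\varepsilon,\beta,B)$ for any $B$, because $\beta\leq1$ forces $\|b\|\geq\beta\|b\|$ for all $b$; combining the two gives the asserted equality.

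For part $(b)$, I would fix $x=(x_n)\in S_{\ell_1(X_n)}$, so that $\sum_n\|x_n\|=1$. For each $n$ with $x_n\neq0$, writing $u_n:=x_n/\|x_n\|\in S_{X_n}$, the hypothesis $S_{X_n}\subset C(\alpha,\beta,cB_{A_n})$ gives $b_n\in cB_{A_n}$ with $\|u_n-b_n\|<\alpha+\beta\|b_n\|$; I put $a_n:=\|x_n\|b_n$, which lies in $A_n$ since $A_n$ is a subspace, and $a_n:=0$ when $x_n=0$. Multiplying the coordinate inequality by $\|x_n\|\geq0$ gives $\|x_n-a_n\|<\alpha\|x_n\|+\beta\|a_n\|$, while $\|a_n\|\leq c\|x_n\|$ yields $\|a\|_1=\sum_n\|a_n\|\leq c\sum_n\|x_n\|=c$, so $a:=(a_n)\in cB_{\ell_1(A_n)}$. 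Summing the coordinate inequalities (the coordinates with $x_n=0$ contributing the equalities $0=0$) gives $\|x-a\|_1\leq\alpha\|x\|_1+\beta\|a\|_1$; to make this strict, I would single out one index $n_0$ with $x_{n_0}\neq0$ — one exists since $\|x\|_1=1$ — where the estimate is already strict, and bound the remaining (countably many) terms by their right-hand sides. This produces $\|x-a\|_1<\alpha\|x\|_1+\beta\|a\|_1$, i.e.\ $x\in C(\alpha,\beta,cB_{\ell_1(A_n)})$.

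The only genuinely delicate point — the one I would watch most carefully — is deducing a strict inequality for the sums from strict inequalities coordinatewise, since over a countable index set such a step is a priori only weak. In part $(b)$ this is handled by isolating a single nonvanishing coordinate and carrying only a weak estimate on the rest; in part $(a)$ the difficulty does not arise, since only finitely many coordinates are perturbed and the infinite tail is swallowed by the slack $\varepsilon M$, which is precisely why an extra $\varepsilon$ appears there. The remaining verifications — that the constructed $a$ lands in the prescribed set ($c_{00}(A_n\cup\{0\})$, respectively $cB_{\ell_1(A_n)}$), that $\alpha\geq0$ is genuinely needed in $(a)$, and that the easy inclusion is immediate — are routine.
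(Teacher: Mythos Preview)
Your argument is correct and follows essentially the same route as the paper's proof: in both parts the witnessing element of the $\ell_1$-sum is assembled coordinatewise from the witnesses in each $X_n$, and the defining inequality is recovered by summing, with the tail absorbed by $\varepsilon M$ in $(a)$ and the scaling handled via homogeneity (the paper invokes Remark~\ref{rem: properties of coverings}(b), you do it by hand) in $(b)$. If anything you are slightly more careful than the paper: you explicitly verify the reverse inclusion $0\notin C(\alpha+\varepsilon,\beta,\cdot)$ in $(a)$, and you address the strictness of the countable sum in $(b)$, both of which the paper leaves implicit.
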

\begin{proof}
	$(a)$. Fix $x\in\ell_1(X_n)\setminus\{0\}$ and $\varepsilon>0$. Find some $n\in\mathbb{N}$ such that $\sum_{j=n+1}^{\infty}\|x(j)\|\le\varepsilon\|x\|_1$. There are $a_1\in A_1,\ldots,a_n\in A_n$ such that $\|x(j)-a_j\|<\alpha\|x(j)\|+\beta\|a_j\|$ for every $j\le n$ that satisfies $x(j)\not=0$.
	
	Define $a\in c_{00}(A_n\cup\{0\})$ by 
	\[
  a(j) :=
  \begin{cases}
                                   a_j & \text{if $j\leq n$ and $x(j)\neq 0$}, \\
                                   0 & \text{otherwise}. 
  \end{cases}
\]
	Therefore,
	$$\|x-a\|_1=\sum_{j=1}^n\|x(j)-a_j\|+\sum_{j=n+1}^{\infty}\|x(j)\|<(\alpha+\varepsilon)\|x\|_1+\beta\|a\|_1.$$
	
	$(b)$. Fix $x\in S_{\ell_1(X_n)}$. For all $n\in\bb{N}$ with $x(n)\not=0$, we can find $y_n \in c\|x(n)\|\cdot B_{A_n}$ satisfying $\n{x(n)-y_n}< \alpha\n{x(n)} + \beta\n{y_n}$, whose existence is guaranteed by Remark~\ref{rem: properties of coverings}(b). Define $y\in cB_{\ell_1(A_n)}$ by 
		\[
  y(n) :=
  \begin{cases}
                                   y_n & \text{if $n\in\mathbb{N}$ is such that $x(n)\not=0$}, \\
                                   0 & \text{otherwise}. 
  \end{cases}
\]
Thus, $\n{x-y}_1<\alpha\n{x}_1 + \beta\n{y}_1$.
\end{proof}

We end this section by spelling out two useful consequences of the aforementioned results.

\begin{corollary}\label{cor: ell_1 sum}
	Let $(X_n)$ be a sequence of normed spaces, $\kappa$ an infinite cardinal and $\beta\in[-1,1]$.
	\begin{itemize}
		\item [$(a)$] Let $\alpha\in[-1,1]$. If there is a $m\in\bb{N}$ such that $S_{X_m}\notin C(\alpha,\beta,\kappa)$, then $S_{\ell_1(X_n)}\notin C(\alpha,\beta,\kappa)$.
		\item[$(b)$] Let $\alpha\in [0,1]$. If $S_{\ell_1(X_n)}\notin C(\alpha,\beta,\kappa)$, then, for every $\varepsilon>0$, there exists $n\in\bb{N}$ such that $S_{X_n}\notin C(\alpha-\varepsilon,\beta,\kappa)$.
	\end{itemize}
\end{corollary}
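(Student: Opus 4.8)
\emph{Proof plan.}
The plan is to prove both halves by contraposition, drawing $(a)$ out of Proposition~\ref{prop: absolute sum} and $(b)$ out of (the proof of) Proposition~\ref{prop: ell_1 sum}$(a)$, with Remark~\ref{rem: properties of coverings} supplying the elementary manipulations.

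For $(a)$, fix $m$ and assume $S_{\ell_1(X_n)}\in C(\alpha,\beta,\kappa)$; the goal is $S_{X_m}\in C(\alpha,\beta,\kappa)$. The point is that $\ell_1(X_n)=X_m\oplus_1 Z$ isometrically, where $Z:=\ell_1((X_n)_{n\neq m})$ and $\oplus_1$ carries the $\ell_1$-norm on $\mathbb{R}^2$ (which is absolute and normalized). If $S_{\ell_1(X_n)}\subset C(\alpha,\beta,A)$ with $|A|\leq\kappa$, then Proposition~\ref{prop: absolute sum} in its $\ell_1$ form (valid for $\alpha\in[-1,1)$) gives $S_{X_m}\subset C(\alpha,\beta,\pi_{X_m}(A))$, and $|\pi_{X_m}(A)|\leq\kappa$, so $S_{X_m}\in C(\alpha,\beta,\kappa)$. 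The lone value $\alpha=1$ is outside the stated scope of Proposition~\ref{prop: absolute sum}, but it costs nothing: applying the covering to $(x_0,0)$ for $x_0\in S_{X_m}$ produces $a=(a_{X_m},a_Z)\in A$ with $\|x_0-a_{X_m}\|+\|a_Z\|<1+\beta(\|a_{X_m}\|+\|a_Z\|)$, hence $\|x_0-a_{X_m}\|<1+\beta\|a_{X_m}\|+(\beta-1)\|a_Z\|\leq 1+\beta\|a_{X_m}\|$ since $\beta\leq 1$, so again $S_{X_m}\subset C(1,\beta,\pi_{X_m}(A))$.

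For $(b)$, argue contrapositively: assume there is $\varepsilon_0>0$ with $S_{X_n}\in C(\alpha-\varepsilon_0,\beta,\kappa)$ for every $n$, and build a covering witnessing $S_{\ell_1(X_n)}\in C(\alpha,\beta,\kappa)$. Fix, for each $n$, a set $A_n^0\subset X_n$ with $|A_n^0|\leq\kappa$ and $S_{X_n}\subset C(\alpha-\varepsilon_0,\beta,A_n^0)$, and follow the proof of Proposition~\ref{prop: ell_1 sum}$(a)$ with two adjustments. First, that proof needs a covering of all of $X_n\setminus\{0\}$; Remark~\ref{rem: properties of coverings}$(c)$ gives one via $\mathbb{R}^{+}A_n^0$, but this set may have cardinality $\mathfrak{c}>\kappa$ when $\kappa<\mathfrak{c}$, so instead one scales by positive \emph{rationals} only: a short normalise-then-perturb estimate yields $X_n\setminus\{0\}=C(\alpha-\tfrac{\varepsilon_0}{2},\beta,\mathbb{Q}^{+}A_n^0)$ with $|\mathbb{Q}^{+}A_n^0|\leq\aleph_0\cdot\kappa=\kappa$. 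Second, the base level $\alpha-\tfrac{\varepsilon_0}{2}$ may be negative, so Proposition~\ref{prop: ell_1 sum}$(a)$ cannot be quoted verbatim; but in the estimate for $\|x-b\|_1$ with $x\in S_{\ell_1(X_n)}$ one has $\sum_{j\leq N}\|x(j)\|=1-\sum_{j>N}\|x(j)\|$, so the term $(\alpha-\tfrac{\varepsilon_0}{2})\sum_{j\leq N}\|x(j)\|$ differs from $\alpha-\tfrac{\varepsilon_0}{2}$ by at most $\sum_{j>N}\|x(j)\|$, which for $N$ large enough is absorbed into the gap $\tfrac{\varepsilon_0}{2}$. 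This produces, for each $x\in S_{\ell_1(X_n)}$, some $b\in c_{00}(\mathbb{Q}^{+}A_n^0\cup\{0\})$ with $\|x-b\|_1<\alpha\|x\|_1+\beta\|b\|_1$; since $|c_{00}(\mathbb{Q}^{+}A_n^0\cup\{0\})|\leq\kappa$, we conclude $S_{\ell_1(X_n)}\in C(\alpha,\beta,\kappa)$.

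The absolute-sum inequality in $(a)$, the normalise-then-perturb estimate in $(b)$, and the $|c_{00}(\cdots)|\leq\kappa$ bookkeeping are all routine, being of exactly the kind already carried out in Propositions~\ref{prop: absolute sum} and~\ref{prop: ell_1 sum}. The one point that genuinely needs care — and which I expect to be the main (if minor) obstacle — is the cardinality control when passing from a covering of $S_{X_n}$ to a covering of $X_n\setminus\{0\}$: one must use a countable dense set of scaling factors rather than all of $\mathbb{R}^{+}$, otherwise the number of balls can jump from $\kappa$ to $\mathfrak{c}$.
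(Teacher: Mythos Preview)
Your approach is correct and matches the paper's: $(a)$ via Proposition~\ref{prop: absolute sum} applied to the splitting $\ell_1(X_n)=X_m\oplus_1 Z$, and $(b)$ via the contrapositive and Proposition~\ref{prop: ell_1 sum}$(a)$; the paper's own proof is nothing more than a one-line reference to each of these two propositions. Your extra care on the cardinality bookkeeping in $(b)$ is actually unnecessary, since Proposition~\ref{prop: equivalent conditions coverings} (through Remark~\ref{rem: properties of coverings}$(a)$: $C(\alpha,\beta,\mathbb{Q}^{+}A)=C(\alpha,\beta,\overline{\mathbb{Q}^{+}A})\supset C(\alpha,\beta,\mathbb{R}^{+}A)$) already converts $S_{X_n}\in C(\alpha-\varepsilon_0,\beta,\kappa)$ into $X_n\setminus\{0\}\in C(\alpha-\varepsilon_0,\beta,\kappa)$ with a witnessing set of size~$\kappa$ --- though your handling of the edge cases $\alpha=1$ in $(a)$ and $\alpha-\varepsilon_0<0$ in $(b)$ does tidy up points the paper leaves implicit.
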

\begin{proof}
	$(a)$ follows by Proposition \ref{prop: absolute sum}.
	
	$(b)$ is an easy consequence of Proposition \ref{prop: ell_1 sum}(a).
\end{proof}

\section{Examples and counterexamples of spaces with the $\alpha$-BCP$_\kappa$}\label{sec: examples}

We begin this section by showing that in a big class of Banach spaces, namely in Lipschitz spaces, the notions of having an octahedral norm and failing the $(-1)$-BCP coincide (see Proposition~\ref{prop: OH and failure of (-1)-BCP are the same in Lip(M)}). To achieve this we will first recall the notion of the Daugavet property.

A Banach space $X$ is said to have the \emph{Daugavet property} if every rank-one operator $T\colon X\to X$ satisfies the norm equality
\[
\|I+T\|=1+\|T\|,  
\] 
where $I\colon X\to X$ denotes the identity operator. In 1963, I.~K.~Daugavet (see \cite{Daugavet1963}) was the first to observe that the space $C[0,1]$ has the Daugavet property. By now this property has attracted a lot of attention and many examples have emerged. Indeed, $C(K)$ spaces for a compact Hausdorff space $K$ without isolated points, $L_1(\mu)$ and $L_\infty(\mu)$ spaces for some atomless measure $\mu$, and Lipschitz spaces $\text{Lip}_0(M)$ whenever $M$ is a length space -- all have the Daugavet property (see \cite{garciaCharacterisationDaugavet2018} and \cite{WernerRecentProgress}). 

\begin{lemma}[see {\cite[Lemma~2.12]{KSSW00}}]\label{lem: X Daugavet then X* fails (-1)-BCP}
If $X$ has the Daugavet property, then $X^\ast$ fails the $(-1)$-BCP.
\end{lemma}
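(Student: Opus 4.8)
The plan is to translate the conclusion through the dictionary between ball-coverings and $\ell_1$-type separation, and then to feed the Daugavet property into the resulting construction.

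\emph{Reformulation.} Applying Proposition~\ref{prop: equivalent conditions coverings} to the normed space $\Xs$ with $\kappa=\omega$ and $\alpha=\beta=1$, the statement that $\Xs$ fails the $(-1)$-BCP, i.e. $S_{\Xs}\notin C(1,1,\omega)$, is equivalent to: for every separable subspace $Y\subseteq\Xs$ there is $\xs\in S_{\Xs}$ with $\|\lambda\xs+\ys\|\ge|\lambda|+\|\ys\|$ for all $\lambda\in\mathbb{R}$ and $\ys\in Y$; by the same homogeneity used in the proof of that proposition it suffices to produce $\xs\in S_{\Xs}$ with $\|\xs+\ys\|\ge 1+\|\ys\|$ for all $\ys\in Y$ (the reverse inequality being automatic). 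Since $\|\xs+\ys\|=\sup_{x\in B_X}(\xs(x)+\ys(x))$ and the two summands are bounded by $1$ and $\|\ys\|$ respectively, this amounts to asking that every slice $\{x\in B_X:\xs(x)>1-\eps\}$ of $B_X$ ($\eps>0$) still $\eps$-attains the norm of every $\ys\in Y$. Thus the problem becomes: for a given separable $Y\subseteq\Xs$, produce a ``universal direction'' $\xs\in S_{\Xs}$ all of whose slices are $Y$-norming.

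\emph{Construction.} Fix a sequence $\{\ys_n:n\in\mathbb{N}\}$ dense in $Y$, with each element repeated infinitely often, and $\eps_n\downarrow 0$. Using the Daugavet property in its geometric form — for every $v\in S_X$ and $\eps>0$, $B_X=\cconv\{x\in B_X:\|x+v\|\ge 2-\eps\}$, so that every slice of $B_X$ contains a point $x$ with $\|x+v\|>2-\eps$ — one inductively builds points $x_n\in B_X$ and functionals $\xs_n\in S_{\Xs}$ with $\ys_n(x_n)>\|\ys_n\|-\eps_n$ and with $\xs_n$ close to $1$ on each of $x_1,\dots,x_n$, the tolerance at a fixed $x_i$ tending to $0$ as $n\to\infty$. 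The step is: choose $v\in S_X$ nearly norming $\ys_n$, then use the Daugavet property to find $x_n\in B_X$ which still nearly norms $\ys_n$ \emph{and} is norm-aligned with the previous configuration, so that some common near-norming functional $\xs_n$ for $x_1,\dots,x_n$ exists. Finally take $\xs$ to be a $\ws$-cluster point of $(\xs_n)$ along indices tending to infinity: then $\xs\in B_{\Xs}$, $\xs(x_i)=\lim_n\xs_n(x_i)=1$ for every $i$ (a $\ws$-continuous quantity), hence $\xs\in S_{\Xs}$; and for the arbitrarily large $k$ with $\ys_k=\ys_n$ one gets $\|\xs+\ys_n\|\ge\xs(x_k)+\ys_n(x_k)=1+\ys_n(x_k)>1+\|\ys_n\|-\eps_k$, whence $\|\xs+\ys_n\|\ge 1+\|\ys_n\|$. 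Density in $Y$ then gives $\|\xs+\ys\|\ge 1+\|\ys\|$ for all $\ys\in Y$.

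\emph{Main obstacle.} The delicate point — the reason this is a genuine lemma — is twofold, and both parts are exactly where the Daugavet property does the work. First, a subspace is closed under sign, so $Y$ contains functionals ``pointing in all directions'': all witnesses $x_k$ must lie near a single face of $\xs$ (so that $\xs(x_k)\to 1$), and yet they must collectively almost-attain functionals pointing every which way; one must invoke the Daugavet property to keep the growing configuration $x_1,\dots,x_n$ norm-aligned while remaining free to put the new $x_n$ wherever $\ys_n$ is nearly attained — a phenomenon impossible in, say, uniformly convex spaces but typical of $C(K)$-like spaces. Second, the inequality must be \emph{exact}, $\|\xs+\ys\|\ge 1+\|\ys\|$ and not merely $\ge(1-\eps)(1+\|\ys\|)$; since the norm is only $\ws$-lower semicontinuous one cannot simply pass to a $\ws$-limit of approximate solutions, so the construction must be arranged so that the lower bounds for $\|\xs+\ys\|$ are read off at points where the limiting functional has value exactly $1$, a $\ws$-continuous condition that does survive the limit. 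This is \cite[Lemma~2.12]{KSSW00}.
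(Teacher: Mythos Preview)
The paper does not give its own proof of this lemma; it simply records the statement and attributes it to \cite[Lemma~2.12]{KSSW00}. So there is nothing to compare against on the paper's side beyond the observation that the conclusion, once unpacked via Proposition~\ref{prop: equivalent conditions coverings}, is exactly what \cite[Lemma~2.12]{KSSW00} asserts. Your opening reformulation carries this out correctly and is a nice bridge between the paper's $C(\alpha,\beta,\kappa)$ language and the ``$\ell_1$-orthogonal functional'' formulation.

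Your proof sketch, however, has a genuine gap at the inductive step. You claim to build $x_n\in B_X$ and $x_n^\ast\in S_{X^\ast}$ with $x_n^\ast(x_i)$ close to $1$ for all $i\le n$, \emph{the tolerance at a fixed $x_i$ tending to $0$ as $n\to\infty$}. The step you describe --- take $v$ nearly norming $y_n^\ast$, apply the Daugavet characterisation with the slice $\{y_n^\ast>\|y_n^\ast\|-\eps_n\}$ and the normalised partial sum $\bar x_{n-1}=(x_1+\cdots+x_{n-1})/c_{n-1}$, obtain $x_n$ in that slice with $\|x_n+\bar x_{n-1}\|>2-b_n$, and pick $x_n^\ast$ norming $x_n+\bar x_{n-1}$ --- yields only $x_n^\ast(x_i)>1-d_n$ with $d_n:=n-c_n$. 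A direct computation shows $d_n\le d_{n-1}+nb_n$ and, since $c_n-c_{n-1}\le1$, that $d_n$ is \emph{non-decreasing}. So even with summable $nb_n$ the tolerances are merely bounded, not tending to $0$, and the $w^\ast$-cluster $x^\ast$ satisfies only $x^\ast(x_i)\ge1-D$ for some fixed $D>0$, giving $\|x^\ast+y^\ast\|\ge(1-D)+\|y^\ast\|$ rather than the exact inequality. You have correctly identified that the exactness is the crux and that $w^\ast$-lower-semicontinuity alone will not rescue an approximate construction; but the mechanism you describe does not actually achieve $x^\ast(x_k)=1$. The argument in \cite{KSSW00} handles this more carefully, and since the paper is content to cite that reference, so may you.
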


Note that in \cite[Theorem~3.5]{Luo2020} it was proved that $L_\infty[0,1]$ fails the BCP. Since $L_1[0,1]$ has the Daugavet property and $(L_1[0,1])^\ast=L_\infty[0,1]$, then, by Lemma~\ref{lem: X Daugavet then X* fails (-1)-BCP}, we have that $L_\infty[0,1]$ even fails the $(-1)$-BCP. Moreover, since $L_\infty[0,1]=\text{Lip}_0([0,1])$, then the previous observation is a particular case of the following result:

\begin{proposition}\label{prop: OH and failure of (-1)-BCP are the same in Lip(M)}
	Let $M$ be a complete metric space. Then the following are equivalent:
	\begin{itemize}
		\item[$(i)$] $\text{Lip}_0(M)$ has the Daugavet property.
		\item[$(ii)$] $\text{Lip}_0(M)$ is octahedral.
		\item[$(iii)$] $\text{Lip}_0(M)$ fails the $(-1)$-BCP.
	\end{itemize}
\end{proposition}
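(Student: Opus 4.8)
The plan is to establish the cycle $(i)\implies(ii)\implies(iii)\implies(i)$, though in fact the implications $(i)\implies(ii)\implies(iii)$ are essentially already available: the first is a general fact that spaces with the Daugavet property are octahedral (this follows directly from unpacking the definitions — given a finite-dimensional subspace $E$ and $\varepsilon>0$, one applies the Daugavet property to a suitably chosen rank-one operator built from a norming functional on $E$, or one simply cites the known equivalence between the Daugavet property and a ``strong'' octahedrality condition), and $(ii)\implies(iii)$ is immediate from Proposition~\ref{prop: finite BCP and OH}, since an octahedral space fails the $\alpha$-BCP$_{<\omega}$ for all $\alpha\in(-1,1)$ and in particular fails the (much stronger) $(-1)$-BCP$_{<\omega}$; a fortiori it fails the $(-1)$-BCP$=(-1)$-BCP$_\omega$... wait, that last direction needs care since a finite covering is a countable covering, so failing the $\alpha$-BCP$_\omega$ is \emph{harder} than failing the $\alpha$-BCP$_{<\omega}$. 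So $(ii)\implies(iii)$ genuinely needs octahedrality to give something about countable coverings, not just finite ones.

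Let me reconsider. I would instead prove $(ii)\implies(iii)$ directly: I would show that an octahedral $\text{Lip}_0(M)$ fails the $(-1)$-BCP. Here the key structural input is that $\text{Lip}_0(M)$ is a dual space, $\text{Lip}_0(M)=\mathcal F(M)^*$ where $\mathcal F(M)$ is the Lipschitz-free space, so I can use weak$^*$ compactness. Actually the cleanest route for $(ii)\implies(iii)$ uses the known characterization (the authors will surely cite it) that for $M$ complete, $\text{Lip}_0(M)$ is octahedral if and only if $M$ is a length space (or satisfies the relevant local condition), and that $M$ being a length space makes $\text{Lip}_0(M)$ have the Daugavet property by \cite{garciaCharacterisationDaugavet2018}. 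In other words, the substantive content of the proposition is the equivalence $(i)\iff(ii)$, which reduces via the García-Sánchez/Procházka/Rueda Zoca-type characterization to: $\text{Lip}_0(M)$ octahedral $\iff$ $M$ length $\iff$ $\text{Lip}_0(M)$ Daugavet. Granting that, $(i)\implies(iii)$ follows from Lemma~\ref{lem: X Daugavet then X* fails (-1)-BCP} applied to $X=\mathcal F(M)$, since $\mathcal F(M)$ has the Daugavet property exactly when $\text{Lip}_0(M)$ does (the Daugavet property passes between a space and its dual when the dual has it — or one invokes that $\mathcal F(M)$ Daugavet $\iff$ $\text{Lip}_0(M)$ Daugavet $\iff$ $M$ length), giving $\text{Lip}_0(M)=\mathcal F(M)^*$ fails the $(-1)$-BCP.

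Concretely, the steps I would carry out are: (1) recall/cite that $\mathcal F(M)$ (hence $\text{Lip}_0(M)$, its dual) has the Daugavet property if and only if $M$ is a length space, establishing $(i)\iff$ ``$M$ is length''; (2) recall/cite the characterization of octahedrality of $\text{Lip}_0(M)$ for complete $M$ in terms of $M$ being a length space, establishing $(ii)\iff$ ``$M$ is length'' and hence $(i)\iff(ii)$; (3) prove $(i)\implies(iii)$ by applying Lemma~\ref{lem: X Daugavet then X* fails (-1)-BCP} with $X=\mathcal F(M)$: if $\text{Lip}_0(M)$ has the Daugavet property then so does $\mathcal F(M)$ (since $M$ is length), so $\mathcal F(M)^*=\text{Lip}_0(M)$ fails the $(-1)$-BCP; (4) close the cycle with $(iii)\implies(ii)$, which is the contrapositive of Proposition~\ref{prop: finite BCP and OH}'s implication that a non-octahedral space satisfies the $\alpha$-BCP$_{<\omega}$ for some $\alpha$ — but here I need to upgrade from $<\omega$ to $\omega$, i.e. show a non-octahedral $\text{Lip}_0(M)$ has the $(-1)$-BCP outright. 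This is the point where I would instead argue: if $\text{Lip}_0(M)$ is not octahedral then $M$ is not a length space (by step 2), and then invoke whatever positive BCP result is available — likely $\text{Lip}_0(M)$ is even separable-like enough, or more realistically just note that $(iii)\implies(i)$ directly via Lemma~\ref{lem: X Daugavet then X* fails (-1)-BCP}'s near-converse is false in general, so the real closing argument must be $\neg(i)\implies\neg(iii)$, i.e., $M$ not length $\implies$ $\text{Lip}_0(M)$ has the $(-1)$-BCP.

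The main obstacle, then, is precisely this last implication $\neg(i)\implies\neg(iii)$: showing that when $M$ fails to be a length space, $\text{Lip}_0(M)$ genuinely has the $(-1)$-BCP, i.e., $S_{\text{Lip}_0(M)}$ can be covered by countably many open balls none of which contains $-B_{\text{Lip}_0(M)}$ (equivalently, by Proposition~\ref{prop: equivalent conditions coverings}, there is a countable-density subspace witnessing the failure of the defining inequality). I expect this to require a quantitative argument: when $M$ is not length, there exist points $x,y\in M$ whose distance is strictly less than the infimum of lengths of curves joining them, and this "gap" should produce a direction in $\text{Lip}_0(M)$ along which one cannot have the octahedral-type lower estimate even approximately against a single fixed function, which one then bootstraps (using separability of a suitable countable substructure of $M$, or a gliding-hump exhaustion) to a countable ball-covering of the sphere avoiding $-\alpha B$ for $\alpha$ close to $1$. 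I would structure this as a lemma isolating the failure of octahedrality into the existence of a finite set $a_1,\dots,a_n\in S_{\text{Lip}_0(M)}$ and $\varepsilon>0$ with no $x\in S_{\text{Lip}_0(M)}$ satisfying $\|x-a_j\|\ge 2-\varepsilon$ for all $j$ — that is exactly the negation of Proposition~\ref{prop: finite BCP and OH}(ii) — and then the harder work is passing from this finite obstruction to a countable covering, for which I would lean on the module/lattice structure of $\text{Lip}_0(M)$ (closure under truncations and maxima) to manufacture countably many balls from the finitely many, much as in the classical proof that separable spaces have the BCP.
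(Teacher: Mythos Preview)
Your steps (1)--(3) essentially reproduce the paper's proof: the paper cites \cite{AMC19} for $(i)\iff(ii)$, and for $(i)\implies(iii)$ it observes that the Daugavet property passes from $\text{Lip}_0(M)$ to its predual $\mathcal F(M)$, then applies Lemma~\ref{lem: X Daugavet then X* fails (-1)-BCP}. So far so good.

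The genuine gap is in your treatment of $(iii)\implies(ii)$. You convince yourself this is the ``main obstacle'' and sketch a substantial programme (exploit non-length structure, lattice operations, gliding hump) to show a non-octahedral $\text{Lip}_0(M)$ has the $(-1)$-BCP. But this implication is completely trivial, and the paper dismisses it in one line as ``clear from definition''. Your confusion is directional: you write ``I need to upgrade from $<\omega$ to $\omega$'', but the upgrade goes the \emph{easy} way. If $X$ is not octahedral, then by Proposition~\ref{prop: finite BCP and OH} it has the $\alpha$-BCP$_{<\omega}$ for some $\alpha\in(-1,1)$, i.e., there is a \emph{finite} set $A$ with $S_X\subset\bigcup_{a\in A}B(a,\|a\|-\alpha)$. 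Since $-\alpha<1$, these balls are contained in the larger balls $B(a,\|a\|+1)$, so the same finite (hence countable) set $A$ witnesses the $(-1)$-BCP. Contrapositively, failing the $(-1)$-BCP implies octahedrality. No structure of $\text{Lip}_0(M)$ is needed for this direction; your last two paragraphs can be deleted entirely.
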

\begin{proof}
The equivalence $(i) \iff (ii)$ follows from \cite[Theorem~1.5]{AMC19}.

$(iii) \implies (ii)$ is clear from definition.

	$(i) \implies (iii)$. Assume that $\text{Lip}_0(M)$ has the Daugavet property. It is known that $\text{Lip}_0(M)$ is a dual space, whose predual is the Lipschitz-free space $\cl F(M)$. Since the Daugavet property passes from the dual space to its predual (see \cite[Lemmata~2.2 and 2.4]{WernerRecentProgress}), then $\cl F(M)$ also has the Daugavet property. Therefore, by Lemma~\ref{lem: X Daugavet then X* fails (-1)-BCP}, $\text{Lip}_0(M)$ fails the $(-1)$-BCP.
\end{proof}

As noted in Remark \ref{rem: rephrase Godefroy's theorem}, Theorem~\ref{thm: godefroy contain ell1} can be expressed also by means of the failure of the $\alpha$-BCP$_{<\omega}$. Nevertheless, it is natural to ask if we can restate Theorem~\ref{thm: renorming ell_1(kappa)} in terms of some infinite version of octahedrality. For this reason we introduce the following definition.

\begin{definition}\label{def: kappa-octahedrality}
	Let $X$ be a normed space and $\kappa$ an infinite cardinal. We say that $X$ is \emph{$\kappa$-octahedral} if for every subspace $Y\subset X$ with $\dens(Y)\le\kappa$ and $\varepsilon>0$, there exists $x\in S_X$ such that for all $\lambda\in\mathbb{R}$ and $y\in Y$ we have $\|\lambda x+y\|\ge(1-\varepsilon)(|\lambda|+\|y\|)$.
\end{definition}

Note that, by Proposition~$\ref{prop: equivalent conditions coverings}$, a normed space $X$ is $\kappa$-octahedral if and only if $S_X\notin C(\alpha,\alpha,\kappa)$ (or equivalently $X\setminus\{0\}\notin C(\alpha,\alpha,\kappa)$) for all $\alpha\in(0,1)$.

We do not know if, up to renorming, the Banach spaces containing $\ell_1(\kappa^+)$ can be characterized in terms of $\kappa$-octahedrality, but we can compare the $\alpha$-BCP$_\kappa$ with $\kappa$-octahedrality. Namely, we finish this section by showing that the following diagram holds.
\vspace{5mm}

\begin{figure}[H]
\caption{} \label{fig: diagram}
\centering
\begin{tikzpicture}
	\path
	(0,3) node (2) {$X$ fails the $(-1)$-BCP$_{\kappa}$}
	(0,1.5) node (3) {$X$ fails the $\alpha$-BCP$_{\kappa}$ for all $\alpha\in (-1,1)$}
	(7,1.5) node (4) {$X$ is $\kappa$-octahedral}
	(0,0) node (5) {$\ell_1(\kappa^+)\subset X$ isomorphically}
	(7,0) node (6) {$X$ is octahedral}
	(-0.9,2.25) node (7) [scale=0.75]{Thm.~\ref{thm: X fails (-alpha)-BCP for all alpha in (0,1)}}
	(4.37,1.95) node (8) [scale=0.75]{Thm.~\ref{thm: OH with BCP}}
	(6.25,0.75) node (9) [scale=0.75]{Ex.~\ref{ex: ell_1 is not w-octahedral}}
	(4,0.45) node (10) [scale=0.75]{Ex.~\ref{ex: ell_1 is not w-octahedral}}
	(4,-0.45) node (11) [scale=0.75]{Ex.~\ref{ex: ell_infty}}
	(-0.75,0.75) node (12) [scale=0.75]{Ex.~\ref{ex: ell_infty}}
	(0.83,0.75) node (13) [scale=0.75]{Prop.~\ref{prop: X contains ell_1}};
	\draw[-implies,double equal sign distance] ([xshift=1mm]2.south)--([xshift=1mm]3.north);
	\draw[nimplies,implies-,double equal sign distance] ([xshift=-1mm]2.south)--([xshift=-1mm]3.north);
	\draw[-implies,double equal sign distance] ([xshift=1mm]3.south)--([xshift=1mm]5.north);
	\draw[nimplies,implies-,double equal sign distance] ([xshift=-1mm]3.south)--([xshift=-1mm]5.north);
	\draw[-implies,double equal sign distance] ([xshift=1mm]4.south)--([xshift=1mm]6.north);
	\draw[nimplies,implies-,double equal sign distance] ([xshift=-1mm]4.south)--([xshift=-1mm]6.north);
	\draw[nimplies,-implies,double equal sign distance] ([yshift=1mm]4.west)--([yshift=1mm]3.east);
	\draw[implies-,double equal sign distance] ([yshift=-1mm]4.west)--([yshift=-1mm]3.east);
	\draw[nimplies,-implies,double equal sign distance] ([yshift=-1mm]5.east)--([yshift=-1mm]6.west);
	\draw[nimplies,implies-,double equal sign distance] ([yshift=1mm]5.east)--([yshift=1mm]6.west);
\end{tikzpicture}
\end{figure}

The unlabelled arrows in Figure~\ref{fig: diagram} follow easily from the corresponding definitions. We begin with some elementary examples.

\begin{example}\label{ex: ell_1 is not w-octahedral}
	The Banach space $\ell_1$ is clearly octahedral, but for every infinite cardinal $\kappa$, it doesn't contain $\ell_1(\kappa^+)$ and it is not ${\omega}$-octahedral. Indeed, $S_{\ell_1}\subset C(\alpha,\alpha,\ell_1)$ for every $\alpha\in(0,1)$, therefore $S_{\ell_1}\in C(\alpha,\alpha,\omega)$ thanks to Remark~\ref{rem: properties of coverings}(a). In general, notice that there is no normed space $X$ which is $\dens(X)$-octahedral.
\end{example}

\begin{example}\label{ex: ell_infty}
	Fix an infinite cardinal $\kappa$. On one hand, by \cite[Fact 7.26]{Hajek_Biorthogonal_2008}, we have that $\ell_1(k^+)\subset\ell_1(2^{\kappa})\subset \ell_{\infty}(\kappa)$. On the other hand, fix any $x\in S_{\ell_\infty(\kappa)}$.
	\begin{itemize}
	    \item If $|x(1)|=1$, then $\|x-\text{sgn}(x(2))e_2\|_\infty=1$.
	    \item If $|x(1)|\not=1$, then $\|x-\text{sgn}(x(1))e_1\|_\infty=1$.
	\end{itemize}
	This shows that $\ell_\infty(\kappa)$ is not octahedral since $S_{\ell_\infty(\kappa)}$ is contained in\\
	$C(2/3,2/3,\{\mp e_1,\mp e_2\})$. In fact for all $x\in S_{\ell_\infty(\kappa)}$ there exists $a\in\{\mp e_1,\mp e_2\}$ such that
	$$\|x-a\|_\infty=1<4/3=2/3\cdot(\|x\|_\infty+\|a\|_\infty).$$
	Moreover, it also shows that $\ell_\infty(\kappa)$ has the $\alpha$-BCP$_{<\omega}$ for all $\alpha\in [-1,0)$ since $S_{\ell_\infty(\kappa)}\subset C(\alpha,1,\{\mp e_1,\mp e_2\})$. In fact we only need to change the previous inequality to
	$$\|x-a\|_\infty=1<\alpha+\|a\|_\infty.$$
\end{example}

Our next aim is to show that there exists a Banach space, which fails the $\alpha$-BCP$_{\kappa}$ for every $\alpha\in(-1,1)$, but that has the $(-1)$-BCP. 

Let us first observe it for $\kappa = \omega$ building on the construction from \cite{cheng-cheng-liu}.

\begin{remark}
The main idea of the necessity proof of \cite[Theorem 2.1]{cheng-cheng-liu} actually gives that the quotient space $\ell_\infty / c_0$ fails the $(-1)$-BCP.
\end{remark}

The remaining part of the same proof can be summed up and extended as follows.
\begin{proposition}
Let $X$ and $Y$ be Banach spaces, let $T \in \cl L(X,Y)$ be onto and let $Y$ fail the $\alpha$-BCP$_{\kappa}$ for some $\alpha \in [-1, 1)$ and some infinite cardinality $\kappa$. Then for every $\e > 0$ there exists $\lambda > 0$ such that $(X, \norm{\cdot}_\lambda)$ fails
the $(\alpha+\e)$-BCP$_{\kappa}$, where 
$\norm{x}_\lambda = \lambda \norm{x} + \n{Tx}$ is an equivalent norm on $X$.
\end{proposition}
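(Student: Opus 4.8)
The plan is to start from a ball-covering of the unit sphere $S_Y$ witnessing the failure of the $\alpha$-BCP$_\kappa$ of $Y$ (equivalently, by Proposition~\ref{prop: equivalent conditions coverings}, the failure for $Y\setminus\{0\}$), and pull it back through $T$ to build a covering of $S_{(X,\norm{\cdot}_\lambda)}$ for a suitable small $\lambda$. First I would fix $\e>0$. Since $Y$ fails the $\alpha$-BCP$_\kappa$, given any set $A\subset Y$ of cardinality $\kappa$ we have $S_Y\not\subset C(-\alpha,1,A)$; phrased contrapositively, there is \emph{no} family of $\kappa$ balls $B(a,\norm{a}+\alpha)$ covering $S_Y$ — and by Remark~\ref{rem: properties of coverings}(c) the same failure holds for $Y\setminus\{0\}$. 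The point of the argument is the converse direction: to \emph{cover} $S_{(X,\norm{\cdot}_\lambda)}$ one would, for each $x$ with $\norm{x}_\lambda=1$, look at $Tx\in Y$; since $T$ is onto, by the open mapping theorem $T B_X \supset \rho B_Y$ for some $\rho>0$, so $\norm{Tx}$ can be controlled from below once $\lambda$ is small relative to $\norm{x}$. The role of $\lambda$ is exactly to make the $\lambda\norm{x}$ contribution negligible, so that $\norm{x}_\lambda\approx\norm{Tx}$ up to an $\e$-distortion; then a covering of (a suitable annulus in) $Y$ by $\kappa$ balls would lift to a covering of $S_{(X,\norm{\cdot}_\lambda)}$ by $\kappa$ balls, contradicting that $X$ with this norm fails the $(\alpha+\e)$-BCP$_\kappa$.

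So the actual logical shape is a proof by contradiction: assume $(X,\norm{\cdot}_\lambda)$ has the $(\alpha+\e)$-BCP$_\kappa$ for every $\lambda>0$ (in particular for a sequence $\lambda\to 0$), i.e. $S_{(X,\norm{\cdot}_\lambda)}\in C(-\alpha-\e,1,\kappa)$, and derive that $Y$ has the $\alpha$-BCP$_\kappa$. Here is the order of steps. Step 1: verify $\norm{\cdot}_\lambda$ is an equivalent norm — immediate, since $\lambda\norm{x}\le\norm{x}_\lambda\le(\lambda+\norm{T})\norm{x}$. Step 2: fix the onto constant, $T B_X\supset\rho B_Y$, and choose $\lambda$ so small (depending on $\e$, $\rho$, $\norm{T}$) that the $\lambda$-term is dominated by an $\e$-fraction of the relevant quantities on the sphere $S_{\norm{\cdot}_\lambda}$. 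Step 3: take a witnessing covering $S_{\norm{\cdot}_\lambda}\subset\bigcup_{c\in C}B(c,\norm{c}_\lambda-(-\alpha-\e))$ with $|C|=\kappa$, and push it forward by $T$: for $y\in S_Y$ pick a preimage $x$ under a bounded right inverse so that $\norm{x}\le\rho^{-1}$, rescale to land on $S_{\norm{\cdot}_\lambda}$, find the covering element $c$, and check via the triangle inequality and the estimate $\norm{Tz}\le\norm{z}_\lambda$ (combined with the reverse estimate on $c$, where $\lambda\norm{c}$ is small) that $\norm{y-Tc}<\norm{Tc}+\alpha$ after possibly absorbing the $\e$. The set $\{Tc:c\in C\}$ then has cardinality at most $\kappa$ and witnesses $S_Y\in C(-\alpha,1,\kappa)$, i.e.\ $Y$ has the $\alpha$-BCP$_\kappa$ — the contradiction.

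The main obstacle is the bookkeeping in Step 3: the radius of a pulled-back ball is measured in the $\norm{\cdot}_\lambda$ norm, but we need to convert it to a statement purely about $\norm{\cdot}$ in $Y$, and the two norms are related only by the $\lambda$-dependent constant $\norm{T}$ on one side and nothing better than $\lambda$ on the other. The trick is that we are free to choose $\lambda$ \emph{after} seeing $\e$ and $\rho$, so the "bad" factor $\lambda$ can always be made to contribute at most $\e$ (or $\e/2$, $\e/3$ as the computation demands) to the inequalities. One should also be slightly careful that the covering element $c\in C$ has $\norm{c}_\lambda$ possibly much larger than $1$, so the term $\lambda\norm{c}$ is \emph{not} automatically small — one circumvents this exactly as in Remark~\ref{rem: properties of coverings}(d) and the proof of Proposition~\ref{prop: infty sums}: replace $c$ by a rescaled $tc$ with $t\ge 1$ large, which only enlarges the ball radius in the direction we want, while normalizing $Tc$; alternatively, restrict attention to preimages $x$ with controlled norm so that $c$ stays in a bounded region. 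Once these normalization issues are handled, everything reduces to elementary triangle-inequality estimates of the type already appearing in the proofs of Propositions~\ref{prop: absolute sum} and~\ref{prop: infty sums}, so I would present those computations compactly rather than in full detail.
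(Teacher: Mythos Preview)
Your contrapositive strategy has a genuine gap exactly where you flag it: the term $\lambda\norm{c}$ coming from a covering center $c\in C$ cannot be controlled. Neither of your proposed fixes works. Replacing $c$ by $tc$ with $t\geq 1$ as in Remark~\ref{rem: properties of coverings}(d) makes $\lambda\norm{tc}$ \emph{larger}, not smaller, so this goes the wrong way; and ``restricting to preimages $x$ with controlled norm'' does nothing for $c$, since you already took $\norm{x}\leq\rho^{-1}$ and the center $c$ that happens to cover this $x$ is dictated by the given covering, not by you. Concretely, after pushing forward one obtains
\[
\norm{y-Tc}\leq\norm{x-c}_\lambda< -(\alpha+\e)\norm{y}+\norm{Tc}+\lambda\norm{c}-(\alpha+\e)\lambda\norm{x},
\]
and to conclude $\norm{y-Tc}<-\alpha\norm{y}+\norm{Tc}$ one would need $\lambda\norm{c}\leq \e\norm{y}+(\alpha+\e)\lambda\norm{x}$, which is impossible to guarantee uniformly when $\norm{c}$ is unbounded.

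The paper sidesteps this by arguing directly rather than by contradiction, reversing the roles of ``given set'' and ``found point''. Given an arbitrary $A\subset X$ with $|A|\leq\kappa$, push it forward to $TA\subset Y$; since $Y$ fails the $\alpha$-BCP$_\kappa$, there is $y$ with $\norm{y-Ta}\geq\norm{Ta}-\alpha\norm{y}$ for all $a\in A$. Pull $y$ back to some $x$ with $r\norm{x}\leq\norm{Tx}$ (open mapping theorem) and estimate $\norm{x-a}_\lambda$ from below. The point is that now only $\norm{x}$ needs to be small relative to $\norm{x}_\lambda$, and it is by construction; the elements $a\in A$ are arbitrary but enter with the right sign. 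One gets $\norm{x-a}_\lambda\geq\norm{a}_\lambda-\bigl(\alpha+\lambda(1-\alpha)/r\bigr)\norm{x}_\lambda$, and a small $\lambda$ finishes. In short: push the $\kappa$-set forward and pull the uncovered point back, not the other way around.
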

\begin{proof}
By the open mapping theorem there exists $r >0$ such that $rB_Y \subset T(B_X)$.
Take a set $A \subset X$ with $|A| \leq \kappa$. Find an element $y \in Y$ such that $\n{y - Ta} \geq \n{Ta} - \alpha \n y$ for all $a \in A$ and find $x$ such that $Tx = y$ and $r \n x \leq \n{Tx}$. Note that $\n x_\lambda \geq (\lambda + r)  \n x \geq r \n x$.
Then
\begin{align*}
\n{x - a}_\lambda &= \lambda \n{x-a} + \n{Tx-Ta}\\
&\geq \lambda(\n a - \n x) + \n{Ta} - \alpha \n{Tx}\\
&= \n{a}_\lambda - \alpha \n{x}_\lambda - \lambda(1- \alpha) \n{x}\\
& \geq \n{a}_\lambda - \alpha \n{x}_\lambda - \frac{\lambda(1 - \alpha)}{r} \n{x}_\lambda,
\end{align*}
so that $(X, \n{\cdot}_\lambda)$ fails the $\left(\alpha + \frac{\lambda(1 - \alpha)}{r}\right)$-BCP$_\kappa$.
\end{proof}

\begin{remark}
\label{neg bcp gap}
Note that in the proposition above, a set  $A \subset \ker T$ satisfies  $S_X \subset C(1,1,A)$ in the new norm if and only if it does so in the original norm.
\end{remark}

\begin{example}
    
Applying the above to the quotient map $T: \ell_\infty \to \ell_\infty / c_0$, we get, for any $\e >0$, a renorming $X_\e$ of $\ell_\infty$, failing the $(-1 + \e)$-BCP, but satisfying $S_{X_\e} \subset C(1,1, 2B_{c_0})$ (in particular, enjoying the $(-1)$-BCP).

Consider the space $\ell_1(X_{1/n})$,
\cite[Proposition~7]{Guirao2019} 
(or its more general version, Corollary ~\ref{cor: ell_1 sum}.(a)) say that it fails the $(-1+\e)$-BCP for all $\e > 0$. On the other hand,
$\ell_1(X_{1/n})$  has the $(-1)$-BCP by
Proposition~\ref{prop: ell_1 sum}.(b).
\end{example}

For the general case, when $\kappa$ is not necessarily $\omega$, let us start with a different building block.

\begin{lemma}\label{lem: renorm ell_1}
	Let $\kappa$ be an infinite cardinal and $\alpha\in (-1,0)$. Let $X_{\alpha,\kappa}:=(\ell_1(\kappa^+),\|\cdot\|_{\alpha})$, where 
	$$\|\cdot\|_{\alpha}:=-\alpha\|\cdot\|_1+(1+\alpha)\|\cdot\|_{\infty}.
	$$
	Then $X_{\alpha,\kappa}$ fails the $\alpha$-BCP$_{\kappa}$, but $S_{X_{\alpha,\kappa}}\subset C(1,1,\{\mp e_1\})$.
\end{lemma}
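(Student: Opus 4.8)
The plan is to work with the norm $\|\cdot\|_\alpha = -\alpha\|\cdot\|_1 + (1+\alpha)\|\cdot\|_\infty$ on $\ell_1(\kappa^+)$, which is indeed equivalent to $\|\cdot\|_1$ since $-\alpha > 0$ and $1+\alpha > 0$ give $-\alpha\|\cdot\|_1 \le \|\cdot\|_\alpha \le (-\alpha + 1 + \alpha)\|\cdot\|_1 = \|\cdot\|_1$ (using $\|\cdot\|_\infty \le \|\cdot\|_1$). For the statement $S_{X_{\alpha,\kappa}} \subset C(1,1,\{\mp e_1\})$: given any $x \in S_{X_{\alpha,\kappa}}$, I will show that one of $\pm e_1$ works as the witness, i.e. $\|x - \varepsilon e_1\|_\alpha < \|x\|_\alpha + \|\varepsilon e_1\|_\alpha$ for a suitable sign $\varepsilon = \pm 1$. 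Note $\|e_1\|_\alpha = -\alpha + (1+\alpha) = 1$. Choosing the sign so that $|x(1) - \varepsilon| = |x(1)| + 1$ (take $\varepsilon = -\mathrm{sgn}(x(1))$, or either sign if $x(1)=0$), the $\|\cdot\|_1$-part of $x - \varepsilon e_1$ is exactly $\|x\|_1 + 1$, while the $\|\cdot\|_\infty$-part satisfies $\|x - \varepsilon e_1\|_\infty \le \|x\|_\infty + 1$; this would only give $\le$, not $<$. The point is that the $\ell_\infty$ inequality is \emph{strict} unless $\|x-\varepsilon e_1\|_\infty$ is attained at the first coordinate with $|x(1)-\varepsilon| = \|x\|_\infty + 1$, which forces $|x(1)| = \|x\|_\infty$; but even then, since $x\in\ell_1(\kappa^+)$ has $\|x\|_1 \ge \|x\|_\infty$ with equality only for $x$ a multiple of a single $e_i$, a short case analysis (either $\|x\|_1 > \|x\|_\infty$, giving strictness from the $\ell_1$ side after recombining, or $x$ is a multiple of some $e_i$, handled directly) yields the strict inequality. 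I would write this as: $\|x-\varepsilon e_1\|_\alpha = -\alpha(\|x\|_1 + 1) + (1+\alpha)\|x-\varepsilon e_1\|_\infty \le -\alpha(\|x\|_1+1) + (1+\alpha)(\|x\|_\infty + 1) = \|x\|_\alpha + 1$, and then argue the inequality is strict.

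Actually the cleanest route to strictness: pick the sign $\varepsilon$ as above; if $\|x\|_1 > 1$ (equivalently $x$ is not $\pm e_i$ for any $i$ up to the fact that $\|x\|_\alpha=1$ need not mean $\|x\|_1=1$), one instead uses that $\|x - \varepsilon e_1\|_\infty \le \max\{|x(1)|+1, \|x\|_\infty\}$ and if $\|x\|_\infty < |x(1)|+1$ we already have what we need from combining; the genuinely tight case $x = \pm e_j$ (in $\|\cdot\|_\alpha$-norm, these are the only unit vectors with $\|x\|_1 = \|x\|_\infty$) is checked by hand: $\|e_j \mp e_1\|_\alpha = -2\alpha + (1+\alpha) = 1 - \alpha < 2 = \|e_j\|_\alpha + \|e_1\|_\alpha$ when $j \ne 1$, and for $j = 1$ one of the two signs gives $0 < 2$. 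So in all cases strictness holds and $x \in C(1,1,\{\mp e_1\})$.

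For the failure of the $\alpha$-BCP$_\kappa$, I would invoke Proposition~\ref{prop: X contains ell_1}'s scheme directly, or better, verify condition $(iii)$ of Proposition~\ref{prop: equivalent conditions coverings} for the pair $(-\alpha, 1)$, i.e. show: for every subspace $Y \subset X_{\alpha,\kappa}$ with $\dens(Y) \le \kappa$, there is $x \in S_{X_{\alpha,\kappa}}$ with $\|\lambda x + y\|_\alpha \ge -\alpha|\lambda| + \|y\|_\alpha$ for all $\lambda \in \mathbb{R}$, $y \in Y$. Since $\dens(Y) \le \kappa < \kappa^+ = $ (number of basis vectors), and $Y$ is separable-of-density-$\kappa$, the support $\bigcup_{y \in D} \supp(y)$ over a dense set $D$ of size $\kappa$ has cardinality at most $\kappa$, so there exists an index $\gamma < \kappa^+$ with $e_\gamma$ disjointly supported from all of $Y$ (more precisely, $Y \subset \overline{\mathrm{span}}\{e_\beta : \beta \in S\}$ for some $S$ with $|S| \le \kappa$, and we take $\gamma \notin S$). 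Take $x = e_\gamma / \|e_\gamma\|_\alpha = e_\gamma$. Then for $y \in Y$ with finite support disjoint from $\gamma$: $\|\lambda e_\gamma + y\|_1 = |\lambda| + \|y\|_1$ and $\|\lambda e_\gamma + y\|_\infty = \max\{|\lambda|, \|y\|_\infty\} \ge \|y\|_\infty$, hence $\|\lambda e_\gamma + y\|_\alpha = -\alpha(|\lambda| + \|y\|_1) + (1+\alpha)\max\{|\lambda|,\|y\|_\infty\} \ge -\alpha|\lambda| - \alpha\|y\|_1 + (1+\alpha)\|y\|_\infty = -\alpha|\lambda| + \|y\|_\alpha$; density extends this to all $y \in Y$. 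By Proposition~\ref{prop: equivalent conditions coverings}, $S_{X_{\alpha,\kappa}} \notin C(-\alpha,1,\kappa)$, which is exactly the failure of the $\alpha$-BCP$_\kappa$.

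The main obstacle I anticipate is the strictness in the first part: getting $<$ rather than $\le$ in $C(1,1,\{\mp e_1\})$ requires the careful case split between $x$ proportional to a single basis vector and the generic case, and making sure the $\ell_\infty$ estimate is handled correctly when the maximum is or is not attained at coordinate $1$. The second part is routine once the disjoint-support index $\gamma$ is produced, which is just a cardinality count on supports; I would state that count as a one-line lemma or inline remark rather than belabor it.
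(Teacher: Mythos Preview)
Your argument for the failure of the $\alpha$-BCP$_\kappa$ is correct and is essentially the paper's: pick a coordinate $\gamma$ outside the union of supports of a given set $A$ of size $\kappa$ and verify $\|e_\gamma - a\|_\alpha \ge -\alpha + \|a\|_\alpha$ directly.

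For the inclusion $S_{X_{\alpha,\kappa}} \subset C(1,1,\{\mp e_1\})$, however, your sign choice $\varepsilon = -\operatorname{sgn}(x(1))$ is the wrong one, and your case analysis does not close the resulting gap. With that choice you have $\|x-\varepsilon e_1\|_1 = \|x\|_1 + 1$ \emph{exactly}, so there is no ``strictness from the $\ell_1$ side'' to appeal to; and on the $\ell_\infty$ side, whenever $|x(1)| = \|x\|_\infty$ you get $\|x-\varepsilon e_1\|_\infty = |x(1)|+1 = \|x\|_\infty + 1$, hence $\|x-\varepsilon e_1\|_\alpha = 2$ on the nose. The condition $|x(1)| = \|x\|_\infty$ is \emph{not} confined to multiples of a single $e_i$: for instance, with $\alpha = -\tfrac12$ and $x = (\tfrac23,\tfrac23,0,\dots)$ one checks $\|x\|_\alpha = 1$ and $\|x+e_1\|_\alpha = 2$, so your chosen $\varepsilon$ fails here. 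The paper takes the opposite sign, $\varepsilon = \operatorname{sgn}(x(1))$ when $x(1)\ne 0$, so that cancellation gives $\|x-\varepsilon e_1\|_1 = \|x\|_1 + 1 - 2|x(1)| < \|x\|_1 + 1$; combined with the crude bound $\|x-\varepsilon e_1\|_\infty \le 1$ (using $\|x\|_\infty \le 1$ on $S_{X_{\alpha,\kappa}}$) this yields $\|x-\varepsilon e_1\|_\alpha \le -\alpha\|x\|_1 + 2\alpha|x(1)| + 1 < 2$. When $x(1)=0$ the paper uses $\|x\pm e_1\|_\infty = 1$ together with $-\alpha\|x\|_1 = 1 - (1+\alpha)\|x\|_\infty < 1$.
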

\begin{proof}
	Fix a set $A\subset X_{\alpha,\kappa}$ of cardinality $\kappa$ and find $\theta\in\kappa^+\setminus\bigcup_{a\in A}\text{supp}(a)$. Then for all $a\in A$
	$$\|a-e_{\theta}\|_{\alpha}=-\alpha(\|a\|_1+1)+(1+\alpha)\max\{\|a\|_{\infty},1\}\ge$$
	$$\ge-\alpha(\|a\|_1+1)+(1+\alpha)\|a\|_{\infty}=\|a\|_{\alpha}-\alpha.$$
	This proves that $X_{\alpha,\kappa}$ fails the $\alpha$-BCP$_{\kappa}$. 
	
	Finally, we show that $S_{X_{\alpha,\kappa}}\subset C(1,1,\{\mp e_1\})$. Let $x\in S_{X_{\alpha,\kappa}}$. If $x(1)=0$, then $\|x+e_1\|_{\infty}=1$. Otherwise, say $x(1)<0$, note that $\|x+e_1\|_1<2$ (if $x(1)>0$, then use $-e_1$ instead). In either case we conclude that for all $x\in X_{\alpha,\kappa}$ there exists $e\in\{\mp e_1\}$ such that $\|x-e\|_\alpha<2$.
\end{proof}

\begin{theorem}\label{thm: X fails (-alpha)-BCP for all alpha in (0,1)}
	For every infinite cardinal $\kappa$ there exists a Banach space $X$ which fails the $\alpha$-BCP$_{\kappa}$ for every $\alpha\in(-1,1)$, but that has the $(-1)$-BCP.
\end{theorem}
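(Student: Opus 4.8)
The natural strategy is to take the building block $X_{\alpha,\kappa}$ from Lemma~\ref{lem: renorm ell_1} and assemble countably many of them in an $\ell_1$-sum, mimicking the $\kappa=\omega$ construction of the previous Example but now using the cardinal-sensitive block. Concretely, I would set
$$X := \ell_1\bigl(X_{\alpha_n,\kappa}\bigr),$$
where $(\alpha_n)$ is a sequence in $(-1,0)$ with $\alpha_n\to -1$ (for instance $\alpha_n = -1 + 1/n$). Each summand $X_{\alpha_n,\kappa}$ fails the $\alpha_n$-BCP$_\kappa$ and satisfies $S_{X_{\alpha_n,\kappa}}\subset C(1,1,\{\mp e_1\})$ by Lemma~\ref{lem: renorm ell_1}. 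The plan is to show this $X$ has both required properties.

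\emph{Failure of the $\alpha$-BCP$_\kappa$ for every $\alpha\in(-1,1)$.} Fix $\alpha\in(-1,1)$. Since $\alpha_n\to -1$, there is some $m$ with $\alpha_m \le \alpha$, hence $X_{\alpha_m,\kappa}$ fails the $\alpha_m$-BCP$_\kappa$, and a fortiori the $\alpha$-BCP$_\kappa$; that is, by Definition~\ref{def: alpha-BCP_kappa} and Proposition~\ref{prop: equivalent conditions coverings}, $S_{X_{\alpha_m,\kappa}}\notin C(-\alpha,1,\kappa)$. Now I would invoke Corollary~\ref{cor: ell_1 sum}(a) with $\beta=1$ and the coordinate $(-\alpha)$ in place of $\alpha$ there: since one summand fails the covering of cardinality $\kappa$, so does the whole $\ell_1$-sum, i.e. $S_X \notin C(-\alpha,1,\kappa)$, which is exactly the failure of the $\alpha$-BCP$_\kappa$.

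\emph{The $(-1)$-BCP.} Here I use the second conclusion of Lemma~\ref{lem: renorm ell_1}: each summand satisfies $S_{X_{\alpha_n,\kappa}}\subset C(1,1,\{\mp e_1\})$, and $\{\mp e_1\}\subset B_{X_{\alpha_n,\kappa}}$ in each block (one checks $\|e_1\|_{\alpha_n}=1$), so in fact $S_{X_{\alpha_n,\kappa}}\subset C(1,1,c\,B_{A_n})$ with $A_n$ the one-dimensional subspace spanned by $e_1$ and $c$ a fixed constant, say $c=1$. Then Proposition~\ref{prop: ell_1 sum}(b) (with $\alpha=\beta=1$) gives $S_{\ell_1(X_{\alpha_n,\kappa})}\subset C(1,1,c\,B_{\ell_1(A_n)})$; since $\ell_1(A_n)$ is a separable subspace, its unit ball is covered by countably many balls of the form used there, so $S_X \in C(1,1,\omega)$, which is precisely the $(-1)$-BCP. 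A small bookkeeping point: Proposition~\ref{prop: ell_1 sum}(b) produces a covering indexed by $c\,B_{\ell_1(A_n)}$, which has cardinality continuum, not $\omega$; to get genuinely countably many balls I would instead combine it with the separability of $\ell_1(A_n)$ (which has the BCP) — or, more directly, observe that $C(1,1,\cdot)$ balls are \emph{large} (they contain points far from the center) and a countable dense subset of $c\,B_{\ell_1(A_n)}$ suffices, arguing as in the final Example of the excerpt via Remark~\ref{rem: properties of coverings}(a). This is the only genuinely delicate step; everything else is a direct citation of the $\ell_1$-sum stability results from Section~\ref{sec: direct sums}.

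The main obstacle I anticipate is precisely this last reconciliation between the "abstract" covering $C(1,1,c\,B_{\ell_1(A_n)})$ and an honestly countable ball-covering witnessing the $(-1)$-BCP; I expect the authors handle it by noting that $\ell_1(A_n)\cong \ell_1$ has the $(-1)$-BCP and that the containment $S_X\subset C(1,1,cB_{\ell_1(A_n)})$ combined with Remark~\ref{rem: properties of coverings}(a) lets one replace the ball by a countable dense set. If one prefers, one could bypass $X_{\alpha,\kappa}$ altogether and run the quotient-renorming Proposition before Remark~\ref{neg bcp gap} with $Y$ a space failing the $\alpha$-BCP$_\kappa$ for all $\alpha$, but the $\ell_1$-sum of Lemma~\ref{lem: renorm ell_1} blocks is the cleaner route and parallels the $\kappa=\omega$ case verbatim.
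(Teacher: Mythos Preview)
Your proposal is correct and follows exactly the paper's approach: take $X=\ell_1(X_{\alpha_n,\kappa})$ with $\alpha_n\searrow -1$, use Corollary~\ref{cor: ell_1 sum}(a) for the failure of every $\alpha$-BCP$_\kappa$, and Proposition~\ref{prop: ell_1 sum}(b) for the $(-1)$-BCP. The cardinality bookkeeping you flag as ``the only genuinely delicate step'' is handled precisely as you suggest---$\ell_1(A_n)$ is separable, so Remark~\ref{rem: properties of coverings}(a) lets you replace $cB_{\ell_1(A_n)}$ by a countable dense subset; the paper simply suppresses this detail.
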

\begin{proof}
	Let $\kappa$ be an infinite cardinal. Fix a sequence $(\alpha_n)\subset (-1,0)$ such that $\inf_n\alpha_n=-1$. For every $n\in\mathbb{N}$ define a Banach space $X_n:=X_{\alpha_n,\kappa}$ as in Lemma~\ref{lem: renorm ell_1} and set $X:=\ell_1(X_n)$. Now, Corollary~\ref{cor: ell_1 sum}(a) implies that $X$ fails the $\alpha$-BCP$_{\kappa}$ for every $\alpha\in(-1,1)$. Moreover, by Proposition \ref{prop: ell_1 sum}(b), $X$ has the $(-1)$-BCP.
\end{proof}

We end this section by proving the existence of a Banach space which is $\kappa$-octahedral, but that has the $\alpha$-BCP for all $\alpha\in [-1,0)$.

\begin{lemma}\label{lem: ell_p}
	For all $\alpha\in(1/2,1)$ and infinite cardinal $\kappa$ there exists a Banach space $X_{\alpha,\kappa}$ that has the $\beta$-BCP for all $\beta\in[-1,0)$ and such that $S_{X_{\alpha,\kappa}}\notin C(\alpha,\alpha,\kappa)$.
\end{lemma}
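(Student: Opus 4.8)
The plan is to take $X_{\alpha,\kappa}:=\ell_p(\kappa^+)$ with its usual norm, for a suitable exponent $p=p(\alpha)\in(1,\infty)$ fixed at the outset, and to verify the two required properties by direct computations with the canonical basis $(e_\gamma)_{\gamma<\kappa^+}$. To choose $p$: the map $p\mapsto 2^{1/p-1}$ decreases on $(1,\infty)$ from $1$ (as $p\to1^+$) to $1/2$ (as $p\to\infty$), so since $\alpha\in(1/2,1)$ there is $p\in(1,\infty)$ with $2^{1/p-1}\ge\alpha$; explicitly $p=(1+\log_2\alpha)^{-1}$ works, lying in $(1,\infty)$ precisely because $\alpha\in(1/2,1)$ forces $1+\log_2\alpha\in(0,1)$.

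For $S_{X_{\alpha,\kappa}}\notin C(\alpha,\alpha,\kappa)$ I would invoke the equivalence $(i)\Leftrightarrow(iii)$ of Proposition~\ref{prop: equivalent conditions coverings} (with $\beta=\alpha$): it is enough to produce, for every subspace $Y\subset\ell_p(\kappa^+)$ with $\dens(Y)\le\kappa$, some $x\in S_{X_{\alpha,\kappa}}$ with $\|\lambda x+y\|_p\ge\alpha(|\lambda|+\|y\|_p)$ for all $\lambda\in\mathbb R$ and $y\in Y$. Picking a dense subset of $Y$ of cardinality $\le\kappa$ and taking the union of the (countable) supports of its members produces $S\subset\kappa^+$ with $|S|\le\kappa$ and $Y\subset\cspn\{e_\gamma:\gamma\in S\}$; since $|S|<\kappa^+$ one may choose $\theta\in\kappa^+\setminus S$ and set $x=e_\theta$. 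Then $e_\theta$ and every $y\in Y$ have disjoint supports, so $\|\lambda e_\theta+y\|_p=(|\lambda|^p+\|y\|_p^p)^{1/p}$, and the elementary estimate $(a^p+b^p)^{1/p}\ge 2^{1/p-1}(a+b)$ for $a,b\ge0$ (equivalently $\|\cdot\|_p\ge 2^{1/p-1}\|\cdot\|_1$ on $\mathbb R^2$; the quotient is minimized at $a=b$) gives $\|\lambda e_\theta+y\|_p\ge 2^{1/p-1}(|\lambda|+\|y\|_p)\ge\alpha(|\lambda|+\|y\|_p)$, as needed.

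For the second property I would fix $\beta\in[-1,0)$ and show that $S_{X_{\alpha,\kappa}}$ is covered by the countable family $\{B(ne_1,\,n+|\beta|):n\in\mathbb N\}\cup\{B(-ne_1,\,n+|\beta|):n\in\mathbb N\}$; since $\|\pm ne_1\|_p=n$, this is exactly the assertion that $X_{\alpha,\kappa}$ has the $\beta$-BCP, i.e. $S_{X_{\alpha,\kappa}}\in C(-\beta,1,\omega)$. The idea is that the centres recede to infinity along the single coordinate axis $\mathbb R e_1$. Indeed, for $x\in S_{X_{\alpha,\kappa}}$ and $n>|x(1)|$ one computes $\|x-\operatorname{sgn}(x(1))ne_1\|_p^p=(n-|x(1)|)^p+(1-|x(1)|^p)$ (taking $\operatorname{sgn}(0):=1$), and since $p>1$ the elementary fact that $(t^p+c)^{1/p}-t\to0$ as $t\to\infty$ (for any fixed $c\ge0$) yields $\|x-\operatorname{sgn}(x(1))ne_1\|_p-n\to-|x(1)|\le0<|\beta|$; hence $x\in B(\operatorname{sgn}(x(1))ne_1,\,n+|\beta|)$ for all sufficiently large $n$. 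Combining this with the previous step proves the lemma.

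I do not expect any deep obstacle; the only mildly technical point is the "fresh coordinate" reduction — that a subspace of $\ell_p(\kappa^+)$ of density $\le\kappa$ lies inside the closed span of at most $\kappa$ of the $e_\gamma$ — which holds because each element of $\ell_p(\kappa^+)$ has countable support and the vectors supported in a fixed index set form a closed subspace. The one conceptual ingredient worth isolating is the covering in the second step: a non-separable sphere can still have the $\beta$-BCP for $\beta<0$ because the required balls are allowed to have arbitrarily large radii, and pushing two sequences of centres out along a single axis already suffices to reach every unit vector. (For $\beta=-1$ one could alternatively avoid the covering altogether: $\ell_p(\kappa^+)$ is strictly convex, so $C(1,1,\{e_1,e_2\})=\ell_p(\kappa^+)\setminus\{0\}\supset S_{X_{\alpha,\kappa}}$.)
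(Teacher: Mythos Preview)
Your proof is correct and the first half matches the paper's: both choose $X_{\alpha,\kappa}=\ell_p(\kappa^+)$ with $p=(1+\log_2\alpha)^{-1}$, pick a coordinate $\theta$ outside the supports of the given $\kappa$-many vectors, and use the elementary inequality $(a^p+b^p)^{1/p}\ge 2^{1/p-1}(a+b)$ (the paper phrases it as Jensen's inequality and works directly with a set $A$ rather than passing through Proposition~\ref{prop: equivalent conditions coverings}, but the content is identical).

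The second half differs genuinely. You exhibit an explicit countable covering $\{B(\pm ne_1,\,n+|\beta|):n\in\mathbb N\}$ and verify membership by the asymptotic $(t^p+c)^{1/p}-t\to0$; this is a hands-on argument specific to $\ell_p$. The paper instead observes that $\ell_p(\kappa^+)$ is reflexive, hence contains no copy of $\ell_1$, and invokes the contrapositive of Proposition~\ref{prop: X contains ell_1} (with $\kappa=\omega$) to conclude the $\beta$-BCP for all $\beta\in[-1,0)$. Your route is more elementary and self-contained (it avoids the Zorn's-lemma machinery behind Proposition~\ref{prop: X contains ell_1}); the paper's route is shorter, more conceptual, and makes transparent why the particular choice of $\ell_p$ is not essential---any reflexive space supporting the first computation would do.
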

\begin{proof}
	Set $p:=(1+\log_2(\alpha))^{-1}\in(1,\infty)$ and $X_{\alpha,\kappa}:=\ell_p(\kappa^+)$. Fix a set $A\subset X_{\alpha,\kappa}$ of cardinality $\kappa$. Find some $\theta\in\kappa^+\setminus\bigcup_{a\in A}\text{supp}(a)$. For all $a\in A$, by Jensen's inequality, we have that
	$$\|e_{\theta}-a\|_p=\bigg(1+\sum|a(\eta)|^p\bigg)^{\frac{1}{p}}\ge\frac{1}{2}2^{\frac{1}{p}}+\frac{1}{2}\bigg(2\sum|a(\eta)|^p\bigg)^{\frac{1}{p}}=$$
	$$=2^{\frac{1}{p}-1}(1+\|a\|_p)=\alpha(1+\|a\|_p).$$
	This proves that $S_{X_{\alpha,\kappa}}\notin C(\alpha,\alpha,A)$. On the other hand $X_{\alpha,\kappa}$ is reflexive, hence it does not contain $\ell_1$. Therefore, by Proposition~\ref{prop: X contains ell_1}, $X_{\alpha,\kappa}$ has the $\beta$-BCP for all $\beta\in[-1,0)$.
\end{proof}

\begin{theorem}\label{thm: OH with BCP}
	For every infinite cardinal $\kappa$ there exists a Banach space which is $\kappa$-octahedral, but that has the $\alpha$-BCP for all $\alpha\in[-1,0)$.
\end{theorem}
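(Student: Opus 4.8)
The plan is to build the required space as an infinite $\ell_1$-sum of the building blocks provided by Lemma~\ref{lem: ell_p}. More precisely, fix an infinite cardinal $\kappa$, choose a sequence $(\alpha_n)\subset(1/2,1)$ with $\alpha_n\to 1$, and for each $n$ let $X_n:=X_{\alpha_n,\kappa}=\ell_{p_n}(\kappa^+)$ be the space given by Lemma~\ref{lem: ell_p}, where $p_n=(1+\log_2(\alpha_n))^{-1}\to 1^+$. Set $X:=\ell_1(X_n)$. I would then verify the two desired properties of $X$ separately.

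First I would check that $X$ has the $\alpha$-BCP for all $\alpha\in[-1,0)$. Each $X_n$ is reflexive (being an $\ell_{p_n}(\kappa^+)$ with $p_n\in(1,\infty)$), so $X=\ell_1(X_n)$ does not contain an isomorphic copy of $\ell_1(\omega^+)$; indeed $X$ itself is weakly sequentially complete and contains $\ell_1$ but one must rule out $\ell_1(\kappa^+)$ — for the relevant case it is enough to invoke Proposition~\ref{prop: X contains ell_1} in contrapositive form. That proposition says that if $X$ failed the $\alpha$-BCP for some $\alpha\in[-1,0)$ then $\ell_1(\omega^+)\subset X$ isomorphically. But $X=\ell_1(X_n)$ with each $X_n$ reflexive does not contain $\ell_1(\omega^+)$: any isomorphic copy of an uncountable $\ell_1$ would, by a standard argument, have to involve uncountably many coordinates in a way incompatible with the $\ell_1$-sum structure over a countable index set with reflexive summands. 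Hence $X$ has the $\beta$-BCP for all $\beta\in[-1,0)$.

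Next I would establish $\kappa$-octahedrality of $X$. By the remark following Definition~\ref{def: kappa-octahedrality}, it suffices to show $S_X\notin C(\alpha,\alpha,\kappa)$ for every $\alpha\in(0,1)$. Given such an $\alpha$, pick $n$ with $\alpha_n>\alpha$; by Lemma~\ref{lem: ell_p} we have $S_{X_n}\notin C(\alpha_n,\alpha_n,\kappa)$, and since $\alpha<\alpha_n$ it follows that $S_{X_n}\notin C(\alpha,\alpha,\kappa)$ as well (enlarging the first parameter only makes the covering condition easier to satisfy, so failing it for the smaller value is stronger — one has to check the monotonicity direction carefully, but $C(\alpha,\alpha,\cdot)\subset C(\alpha_n,\alpha_n,\cdot)$ for $\alpha\le\alpha_n$). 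Then Corollary~\ref{cor: ell_1 sum}(a), applied with the single index $m=n$ and $\beta=\alpha$, gives $S_{\ell_1(X_n)}=S_X\notin C(\alpha,\alpha,\kappa)$. As $\alpha\in(0,1)$ was arbitrary, $X$ is $\kappa$-octahedral.

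The main obstacle I anticipate is the first part: carefully justifying that $\ell_1(X_n)$ with reflexive summands over a countable index set cannot contain $\ell_1(\omega^+)$, so that Proposition~\ref{prop: X contains ell_1} really does yield the $\alpha$-BCP for all $\alpha\in[-1,0)$. The cleanest route is probably to note that $X=\ell_1(X_n)$ is weakly compactly generated or at least that it admits no uncountable $\ell_1$-system: any bounded uncountable family, after a pigeonhole on which coordinate block carries a fixed fraction of the norm, would produce an uncountable $\ell_1$-system inside some reflexive $X_n$, which is impossible. Once that point is settled, everything else is a direct assembly of Lemma~\ref{lem: ell_p} and Corollary~\ref{cor: ell_1 sum}.
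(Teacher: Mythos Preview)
Your construction and your argument for $\kappa$-octahedrality are exactly those of the paper: form $X=\ell_1(X_{\alpha_n,\kappa})$ with $\alpha_n\uparrow 1$ and invoke Corollary~\ref{cor: ell_1 sum}(a). The monotonicity step $S_{X_n}\notin C(\alpha_n,\alpha_n,\kappa)\Rightarrow S_{X_n}\notin C(\alpha,\alpha,\kappa)$ for $\alpha\le\alpha_n$ is correct.

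For the $\alpha$-BCP part you take a genuinely different route. The paper simply applies Corollary~\ref{cor: ell_1 sum}(b): if $X=\ell_1(X_n)$ failed the $\alpha$-BCP for some $\alpha\in[-1,0)$, i.e.\ $S_X\notin C(-\alpha,1,\omega)$, then some $X_n$ would fail the $(\alpha+\varepsilon)$-BCP for a suitable small $\varepsilon>0$, contradicting Lemma~\ref{lem: ell_p}. This stays entirely within the covering framework developed in Section~\ref{sec: direct sums} and requires no structural theory. Your approach instead goes through the contrapositive of Proposition~\ref{prop: X contains ell_1}, which obliges you to prove that a countable $\ell_1$-sum of reflexive spaces contains no isomorphic copy of $\ell_1(\omega_1)$. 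Your pigeonhole sketch is essentially right and can be completed: given an uncountable $c$-equivalent $\ell_1$-family $(e_\gamma)_{\gamma<\omega_1}\subset S_X$, choose for each $\gamma$ an $N_\gamma$ with $\sum_{n>N_\gamma}\|e_\gamma(n)\|<c/3$, pass to an uncountable subfamily with common $N$, and observe that the projections onto $\bigoplus_{n\le N}X_n$ still form an uncountable $\ell_1$-family inside a reflexive space, which is impossible. So your argument does go through; it is just more work, and you correctly identified it as the main obstacle. The payoff of the paper's route is that it avoids this structural detour entirely by using the stability result already proved.
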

\begin{proof}
	Fix a sequence $(\alpha_n)\subset (1/2,1)$ such that $\sup_n\alpha_n=1$ and for all $n\in\mathbb{N}$ pick a Banach space $X_n:=X_{\alpha_n,\kappa}$ as in Lemma~\ref{lem: ell_p}. Now, Corollary~\ref{cor: ell_1 sum}(b) implies that $X:=\ell_1(X_n)$ has the $\alpha$-BCP for all $\alpha\in[-1,0)$. Moreover, Corollary~\ref{cor: ell_1 sum}(a) shows that $X$ is $\kappa$-octahedral.
\end{proof}

\section{Remarks and open questions}\label{sec: questions}

Let us end the paper with some questions that are suggested by the current work.

Recall that Theorem~\ref{thm: renorming ell_1(kappa)} shows that if a Banach space contains $\ell_1(\kappa^+)$, then it can be equivalently renormed to fail the $\alpha$-BCP$_\kappa$ for all $\alpha\in (-1,1)$. Therefore, we wonder whether there is a renorming such that the space fails even the $(-1)$-BCP$_\kappa$. Note that Theorem~\ref{thm: X fails (-alpha)-BCP for all alpha in (0,1)} suggests that the answer to the following question could be negative. 

\begin{ques}
Let $\kappa$ be an infinite cardinal. Can every Banach space containing an isomorphic copy of $\ell_1(\kappa^+)$ be equivalently renormed so that it fails the $(-1)$-BCP$_{\kappa}$?
\end{ques}

As already mentioned in Section 5, it is natural to ask whether we can rephrase Theorem~\ref{thm: renorming ell_1(kappa)} in terms of $\kappa$-octahedrality in order to obtain a result closer to Theorem~\ref{thm: godefroy contain ell1}. Notice that Proposition~\ref{prop: renorm X to fail -alpha-BCP} provides one direction since the failure of the $\alpha$-BCP$_\kappa$ for all $\alpha\in (-1,1)$ implies $\kappa$-octahedrality, but Theorem~\ref{thm: OH with BCP} suggests that the converse might not hold. 

\begin{ques}
Let $\kappa$ be an infinite cardinal. If a Banach space is ${\kappa}$-octahedral, then does it contain an isomorphic copy of $\ell_1(\kappa^+)$?
\end{ques}

In particular, we do not know the answers to these questions even in the case $\kappa = \omega$.

\bibliographystyle{siam}

\end{document}